%Submitted on HAL / arXiv Sept. 23 2014.
\documentclass[10pt,fleqn]{amsart}

\usepackage{amsmath, amsfonts, amssymb, amsthm, bm}
\usepackage{hyperref}

\newcommand{\ZZ}{\mathbb{Z}}
\newcommand{\FF}{\mathbb{F}}
\newcommand{\CC}{\mathbb{C}}
\newcommand{\TT}{\mathbb{T}}
\newcommand{\PP}{\mathbb{P}}

\newcommand{\pitilde}{\widetilde{\pi}}

\DeclareMathOperator{\sgn}{sgn}
\DeclareMathOperator{\GL}{GL}
\DeclareMathOperator{\SL}{SL}

\newtheorem{theorem}{Theorem}[section]
\newtheorem{proposition}[theorem]{Proposition}
\newtheorem{lemma}[theorem]{Lemma}
\newtheorem{corollary}[theorem]{Corollary}

\theoremstyle{remark}
\newtheorem{remark}[theorem]{Remark}

\theoremstyle{definition}
\newtheorem{definition}[theorem]{Definition}
\numberwithin{equation}{section}

\title[The Legendre determinant form]{The Legendre determinant form for Drinfeld modules in arbitrary rank}
%\date{}
\author{R. B. Perkins}
\email{perkins@math.univ-lyon1.edu, rudolph.perkins@gmail.com}
\address{Institut Camille Jordan,
Facult\'e de Sciences et Techniques,
23 Rue Dr Paul Michelon,
42023 Saint-Etienne Cedex, France}
\keywords{Drinfeld modular forms, deformations, cohomology, $t$-motives, Legendre period relation}
\subjclass{MSC 11F52} %Modular forms assoc. to Drinfeld modules

\begin{document}

\begin{abstract}
For each positive integer $r$, we construct a nowhere-vanishing, single-cuspidal Drinfeld modular form for $\GL_r(\FF_q[\theta])$, necessarily of least possible weight, via determinants using rigid analytic trivializations of the universal Drinfeld module of rank $r$ and deformations of vectorial Eisenstein series. Along the way, we deduce that the cycle class map from de Rham cohomology to Betti cohomology is an isomorphism for Drinfeld modules of all ranks over $\FF_q[\theta]$.
\end{abstract}

\maketitle

\section{Introduction}
Let $\FF_q$  be the finite field with $q$ elements, and $\theta$ an indeterminate over this field. Let $A, K, K_\infty$, and $A_+$ be $\FF_q[\theta], \FF_q(\theta)$, $\FF_q((\theta^{-1}))$, and the set of monic polynomials in $A$, respectively. Let $\CC_\infty$ be the completion of an algebraic closure of $K_\infty$ equipped with the unique extension $|\cdot|$ of the absolute value making $K_\infty$ complete, and normalized so that $|\theta| := q$. Unless stated otherwise, $r \geq 2$ will denote a fixed positive integer and the juxtaposition of matrices of appropriate sizes indicates their matrix product. For a matrix $A$ of any size, $A^{tr}$ will denote its transpose.

\subsection{A brief history of Drinfeld modular forms of higher rank}
In the published version \cite{GossEis} of his thesis, D. Goss introduced the notion of Drinfeld modular forms for $\SL_r$, building on the rigid-analytic theory for moduli spaces of Drinfeld modules of a fixed rank $r$ originally introduced by Drinfeld. Goss developed several useful tools including Eisenstein series and affinoid coverings of these Drinfeld period domains helpful for describing expansions at the cusps. In rank 2, which is most closely comparable to the classical theory of modular forms and where a compactification of the Drinfeld modular variety was immediately available to him, Goss introduced the first notion of the expansion of a Drinfeld modular form at infinity and worked out many details of the theory in this case.

Sometime later, M. Kapranov \cite{Kap} gave an explicit construction of the Satake compactification of these Drinfeld modular varieties in the $A = \FF_q[\theta]$ setting. With the help of Kapranov, Goss then sketched in \cite{Gossint} how to obtain finite dimensionality of the spaces of modular forms of higher ranks using the Tate theory and Kapranov's constructions.
By different means, Pink \cite{Pink} has constructed Satake compactifications of the period domains of Drinfeld in full generality and, using these compactifications he has introduced a notion of positive characteristic valued algebraic modular forms for $\SL_r$, remarking that these forms should coincide with Goss' analytically defined modular forms from \cite{GossEis}.
To develop the analytic side of this theory, Breuer and Pink introduced a rigid-analytic uniformizer at the cusps for ranks $r \geq 3$, generalizing the uniformizer introduced in the rank 2 setting by Goss.
Basson developed the ideas of Breuer-Pink in his dissertation \cite{Bass} and studied the coefficients of the expansion at infinity for certain higher rank Drinfeld modular forms including the Eisenstein series introduced by Goss and the coefficient forms coming from the universal Drinfeld module of rank $r$.

\subsection{The history of the Legendre determinant form}

In \cite{Gekinv}, Gekeler introduced the notion of Drinfeld modular forms for $\GL_2(A)$ with non-trivial a type character $\pmod{q-1}$, and he introduced a nowhere-vanishing, single-cuspidal modular form $h$ of type 1 and weight $q+1$ in the form of a Poincar\'e series, giving two alternate descriptions of the form. Later, using an idea due to Anderson, a Legendre period relation for Drinfeld modules, Gekeler gave in \cite{Gekcompo} another construction of the form $h$. There he realized $h$ as the reciprocal of the nowhere-vanishing function given by mapping $z \in \CC_\infty \setminus K_\infty$ to the determinant of the matrix $\left( \begin{smallmatrix} z & 1 \\ \eta_1(z) & \eta_2(z) \end{smallmatrix} \right)$, \emph{the Legendre determinant}, where $1,z$ are periods and $\eta_1(z),\eta_2(z)$ are quasi-periods of the lattice $Az + A$.
It turns out that the holomorphy and nowhere-vanishing of this determinant are wrapped up in a cycle class map, also attributed to Anderson, that exists between the de Rham cohomology and the Betti cohomology for a given Drinfeld module of any rank.
Gekeler gave a proof that this map is an isomorphism in \cite{Gekcrelles} via an elegant explicit construction, and he noticed in \cite{Gekcompo} that this Legendre determinant construction for Drinfeld modules of higher ranks $r$ also produces a weakly modular Drinfeld modular form of type 1 and weight $1 +q+\cdots+q^{r-1}$ for $\GL_r(A)$, giving the full details when $r = 2$.

\subsection{Main results}
This note may be seen as a completion of the work done in \cite{Gekcompo}, finishing the proof that Gekeler's Legendre determinant form for $\GL_r(A)$ is holomorphic at infinity for general $r$, while also extending the notion of deformations of vectorial modular forms introduced by Pellarin in \cite{FPannals, FPcrelles} to the rank $r$ setting and demonstrating the role deformations of vectorial Eisenstein series play in the construction of a nowhere-vanishing single cuspidal form of least weight.

We adopt the approach of Pellarin, analyzing the modularity properties of the rigid analytic trivialization of the universal Drinfeld module of rank $r$, to prove the existence of a nowhere-vanishing, single cuspidal Drinfeld modular form of type 1 and weight $1+q+\cdots+q^{r-1}$ for $\GL_r(A)$ in the sense introduced by Breuer-Pink and developed in Basson's dissertation \cite{Bass}; the one-dimensionality of this space is an immediate result of the existence of such a form.
We dub the form so constructed \emph{the Legendre determinant form}.
To accomplish this goal, we introduce the theory of deformations of vectorial Drinfeld modular forms for $\GL_r(A)$ sufficiently to make comparisons with the Basson-Breuer-Pink theory.
We avoid some of the closer analysis of Anderson generating functions done in \cite{Gekcompo} through the use of a $\tau$-difference equation satisfied by the rigid analytic trivialization matrix studied below.
Because the construction of our form is somewhat inexplicit using these trivializations, we also introduce deformations of vectorial Eisenstein series for $\GL_r(A)$ to obtain a form with more precise information about the expansion at infinity, as studied in \cite{Bass}, for comparison with future developments.
As a corollary, we also obtain some non-trivial information about these deformations of vectorial Eisenstein series for $\GL_r(A)$.
Along the way, we give a new proof of the non-singularity of the cycle class isomorphism between the de Rham cohomology and Betti cohomology for Drinfeld modules of arbitrary rank over $A$.

Of course, such a nowhere-vanishing, single cuspidal form, as constructed below, is extremely valuable for inductively determining the algebra structure of the space of Drinfeld modular forms of rank $r$. Therefore, we shall be careful not to use any results on the structure of the aforementioned algebra. We shall only assume the finite dimensionality of the spaces of forms for $\GL_r(A)$ of a fixed weight and type defined below.

It should be said that these ideas are not totally new, already appearing to some extent in \cite{Gekcompo, Gosscoho, FPcrelles}, but with the emergence of more analytic tools for the study of Drinfeld modular forms of higher ranks, it seemed timely to work out the full details in the construction of such a form and to emphasize the connection with the various cohomologies lurking in the background.
Further, it seems to the author that the construction of the form $h$ given below demonstrates an intrinsic reason for its existence, and we hope that this short note will serve as a useful companion to Basson's dissertation and the forthcoming works of Breuer-Pink, Basson-Breuer-Pink and others.

\subsubsection{Acknowledgements} The author would like to thank F. Pellarin for his mentorship and for the opportunity to learn these things. Thanks go to D. Goss for several useful comments on presentation and to D. Basson for helpful conversations.

\section{Deformations of Drinfeld modular forms in arbitrary rank}

\subsection{Drinfeld's rigid analytic period domain}

Let $\Omega^r := \PP^{r-1}(\CC_\infty) \setminus \bigcup H$ be \emph{Drinfeld's period domain}, where the union is over all $K_\infty$-hyperplanes $H$ in $\PP^{r-1}(\CC_\infty)$. The set $\Omega^r$ is a connected (in fact, simply-connected \cite[(3.4) and (3.5)]{VDP}) admissible open subset of $\PP^{r-1}(\CC_\infty)$, and we view it embedded in $M_{1 \times r}(\CC_\infty)$ as row vectors\footnote{Note that we have reversed the indexing for the vector $\bm{z}$ from what appears in Basson's dissertation.} $\bm{z} = (z_r,z_{r-1},\dots,z_1)$, with $z_1 = 1$. The space $\Omega^r$ should be viewed as the moduli space of isogeny classes of Drinfeld modules of rank $r$.

\subsubsection{Action of $\GL_r(A)$}
Following Basson and differing slightly from the standard set in rank two, the group $\Gamma_r := \GL_r(A)$ acts on $\Omega^r \subseteq M_{1 \times r}(\CC_\infty)$ via
\[\gamma \bm{z} := (\bm{z} \gamma^{-1} \bm{e}_r)^{-1} \bm{z} \gamma^{-1}, \text{ for all } \gamma \in \Gamma_r \text{ and } \bm{z} \in \Omega^r,\]
where, from now on, for $i = 1,2,\dots,r$, we write $\bm{e}_i$ for the column vector whose $i$-th entry equals $1$ and whose remaining $r-1$ other entries are all zero. The scalar
\[j(\gamma,\bm{z}) := \bm{z} \gamma^{-1} \bm{e}_r \in \CC_\infty, \]
defined for all $\gamma \in \Gamma_r$ and $\bm{z} \in \Omega$, is a basic factor of automorphy. It does not vanish for any choices of $\bm{z},\gamma$ by the $K_\infty$-linear independence of the elements of $\Omega^r$.

\subsubsection{Imaginary part, an admissible covering}
Given a $K_\infty$-rational hyperplane $H \subseteq \PP^{r-1}(\CC_\infty)$, we choose to represent it by an equation
\[\ell_H(\bm{z}) := \eta_1 z_1+ \eta_2 z_2 + \cdots + \eta_r z_r\]
such that $|\eta_i| \leq 1$, for all $i$ and $|\eta_j| = 1$ for at least one $j$ and where $\bm{z} := (z_r,z_{r-1},\dots,z_1)$. Such a representation is unique up to multiplication by an element in $K_\infty$ of absolute value 1, and the number $|\ell_H(\bm{z})|$ is well-defined.

Let $|\bm{z}| := \max_{i = 1,\dots,r}|z_i|$. Observe that we have required that $z_1 = 1$, and hence $|\bm{z}| \geq 1$, for all $\bm{z} \in \Omega^r$.

Finally, we define the \emph{imaginary part} of an element $\bm{z} \in \Omega^r$ by
\[|\bm{z}|_\Im := \inf_{H}\{ |\ell_H(\bm{z})| \},\]
where the infimum is taken over all $K_\infty$-rational hyperplanes $H \subseteq \PP^{r-1}(\CC_\infty)$, as above. Intuitively, this number measures the ``proximity to infinity'' on $\Omega^r$.

For each non-negative integer $n$, we define
\[\Omega_n = \{\bm{z} \in \Omega^r : |\bm{z}|_\Im \geq q^{-n} |\bm{z}|\}.\]
The sets $\Omega_n$ are affinoid subsets of the rigid analytic space $\Omega^r$, and their collection $\{\Omega_n : n \geq 0 \}$ forms an admissible covering of $\Omega^r$, \cite[Thm. 2.6.12]{Bass}.

\begin{remark}
An admissible covering of a very similar flavor appears in \cite{GossEis}, and a precise connection with the imaginary part defined above and the $d$ function of Goss can be made via Prop. 1.64 of \emph{loc. cit.}.
\end{remark}

\subsection{$\TT$-valued holomorphic functions on $\Omega^r$}

Let $t$ be an indeterminate over $\CC_\infty$, and let $\TT$ be the \emph{Tate algebra}, i.e. the completion of $\CC[t]$ equipped with the \textit{Gauss norm},
\[\displaystyle{\left|\left|\sum f_i t^i \right| \right| := \max \{ |f_i| \}}.\]

Let $\CC_\infty\{\tau\}$ be the non-commutative ring of \emph{twisted polynomials}; see \emph{e.g.} \cite[Ch. 1]{Gbook}. The Tate algebra has the structure of a (left-) $\CC\{\tau\}$-module determined by the continuous action of $\tau$ given by $\tau\left( \sum f_i t^i  \right) := \sum f_i^{q} t^i$.

\begin{definition}
A function $f : \Omega_n \rightarrow \TT$ shall be called \emph{holomorphic on} $\Omega_n$ if it is the uniform limit of rational functions $\TT(z_1,\dots,z_r)$ whose poles lie outside of $\Omega_n$. A function $f: \Omega^r \rightarrow \TT$ shall be called \emph{holomorphic on} $\Omega^r$ if its restriction to $\Omega_n$ is holomorphic for all $n$. We shall say a function $\Omega^r \rightarrow \TT^r$ is \emph{holomorphic on} $\Omega^r$ if each of its coordinates is holomorphic on $\Omega^r$.
\end{definition}

Observe that the holomorphic functions defined above are simply the holomorphic functions from \cite[Def. 2.6.14]{Bass} tensored over $\CC_\infty$ with $\TT$.

\subsection{Parameter at infinity for $\Gamma_r$}
Given a column vector $\bm{a} \in A^{r-1}$, let $[\bm{a}]$ be the square matrix whose first column equals ${0 \choose \bm{a}}$ and whose remaining entries are all zero.
One readily observes that there is an injective group homomorphism $\alpha : A^{r-1} \rightarrow \Gamma_r$ given by
\[ \bm{a} \mapsto [\bm{a}] + [\bm{e}_1 \ \bm{e}_2 \ \cdots \ \bm{e}_r] \text{ for all } \bm{a} \in A^{r-1}. \]
We write $\Gamma_\alpha$ for the image in $\Gamma_r$ of this injection.

Given a lattice $\Lambda \subset \CC_\infty$ of any positive integer rank, there corresponds an exponential function $\mathfrak{e}_\Lambda(w) := w\prod_{\lambda \in \Lambda} \left(1 - w\lambda^{-1}\right)$ which is a $\FF_q$-linear, entire (hence surjective) endomorphism on the additive group of $\CC_\infty$. Given a vector $\bm{z} \in \Omega^r$, we shall write $\bm{z} := (z_r,\widetilde{\bm{z}})$ and consider here the rank $r-1$ lattices $\Lambda(\widetilde{\bm{z}}) := \widetilde{\bm{z}} A^{r-1} \subseteq \CC_\infty$. We make the abbreviation $\mathfrak{e}_{\widetilde{\bm{z}}} := \mathfrak{e}_{\Lambda(\widetilde{\bm{z}})}$.

Since we are working over $A := \FF_q[\theta]$, we follow Goss's convention, extended to ranks $r \geq 3$ by Basson, of normalizing the parameter at infinity to be defined just below with a fundamental period of the Carlitz module $\pitilde := \lambda_\theta \theta \prod_{j \geq 1} ( 1 - \theta^{1-q^j} )^{-1}$
which is unique up to the choice $\lambda_\theta$ of $(q-1)$-th root of $-\theta$ in $\CC_\infty$. One chooses this normalization so that a nice notion of integrality for the coefficient forms of the $u$-expansion appearing below may be developed.

\begin{definition}[Goss $r = 2$, Breuer-Pink $r \geq 3$]
The \emph{parameter at infinity} on $\Omega^r$ for $\Gamma_r$ is the function
\[u_{\widetilde{\bm{z}}}(z_r) := \mathfrak{e}_{\pitilde\widetilde{\bm{z}}}(\pitilde z_r)^{-1} = \pitilde^{-1} \mathfrak{e}_{\widetilde{\bm{z}}}(z_r)^{-1}.\]
\end{definition}
The function $u_{\widetilde{\bm{z}}}(z_r)$ is invariant under the action of $\Gamma_\alpha$ on $\Omega^r$, that is, under the shifts $(z_r,\bm{\widetilde{z}}) \mapsto (z_r - \widetilde{\bm{z}}\widetilde{\bm{a}}, \widetilde{\bm{z}})$, for all column vectors $\widetilde{\bm{a}} \in A^{r-1}$. Their primary use for us is captured by the next result.

\begin{proposition} \label{uexpprop}
Any $\Gamma_\alpha$ invariant rigid holomorphic function $f:\Omega^r \rightarrow \TT$ may be written in the form
\[f(\bm{z}) = \sum_{n \in \ZZ} f_n(\widetilde{\bm{z}}) u_{\widetilde{\bm{z}}}(\bm{z} \bm{e}_1)^n, \]
for a unique choice of holomorphic functions $f_n : \Omega^{r-1} \rightarrow \TT$ whenever $|\bm{z}|_\Im$ is sufficiently large, depending on $f$.
\begin{proof}
This follows readily from \cite[Prop. 3.2.5]{Bass} using the realization of our holomorphic functions as those of Basson tensored over $\CC_\infty$ with $\TT$.
\end{proof}
\end{proposition}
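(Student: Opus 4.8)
The plan is to reduce the $\TT$-valued statement to the $\CC_\infty$-valued $u$-expansion recorded in \cite[Prop. 3.2.5]{Bass} and then extend scalars to $\TT$, exploiting the observation recorded after the definition of holomorphic functions that our holomorphic functions are exactly Basson's holomorphic functions tensored over $\CC_\infty$ with $\TT$.

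First I would fix $\widetilde{\bm{z}} \in \Omega^{r-1}$ and examine the dependence on $z_r = \bm{z}\bm{e}_1$. The map $\alpha$ identifies $\Gamma_\alpha$ with $A^{r-1}$ acting by the shifts $z_r \mapsto z_r - \widetilde{\bm{z}}\widetilde{\bm{a}}$, which is precisely translation by the lattice $\Lambda(\widetilde{\bm{z}}) = \widetilde{\bm{z}}A^{r-1}$. Hence the $\Gamma_\alpha$-invariance of $f$ says exactly that $z_r \mapsto f(\bm{z})$ is $\Lambda(\widetilde{\bm{z}})$-periodic. Since $\mathfrak{e}_{\widetilde{\bm{z}}}$ identifies $\CC_\infty/\Lambda(\widetilde{\bm{z}})$ with $\CC_\infty$ and $u_{\widetilde{\bm{z}}}(z_r) = \pitilde^{-1}\mathfrak{e}_{\widetilde{\bm{z}}}(z_r)^{-1}$ is a coordinate tending to $0$ as $|\bm{z}|_\Im \to \infty$, the region of large imaginary part is carried to a punctured disk in the $u$-variable. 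A $\Lambda(\widetilde{\bm{z}})$-periodic rigid holomorphic function therefore descends to a rigid holomorphic function on an annulus, which admits a Laurent expansion $\sum_n f_n(\widetilde{\bm{z}})u^n$; this is the content of the scalar result. The coefficients $f_n(\widetilde{\bm{z}})$ are the uniquely determined Laurent coefficients, and they vary holomorphically in $\widetilde{\bm{z}}$ by a standard argument using the admissible covering $\{\Omega_n\}$ and uniform convergence of the rational approximants on each $\Omega_n$.

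Next I would extend scalars. By the observation cited above, a $\TT$-valued holomorphic $f$ is a convergent series $f = \sum_i g_i t^i$ with each $g_i$ a $\CC_\infty$-valued holomorphic function and $\|g_i\|$ suitably controlled. Uniqueness of the $t$-coefficients forces each $g_i$ to inherit the $\Gamma_\alpha$-invariance of $f$, since $\Gamma_\alpha$ acts only on $\bm{z}$ and fixes $t$. Applying the scalar case to each $g_i$ gives $g_i = \sum_n g_{i,n}(\widetilde{\bm{z}})u^n$, and reassembling yields $f = \sum_n f_n(\widetilde{\bm{z}})u^n$ with $f_n := \sum_i g_{i,n}t^i$. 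Because $u_{\widetilde{\bm{z}}}(z_r)$ is $\CC_\infty$-valued and the Gauss norm is the supremum over the $t$-coefficients, the double series rearranges and the $f_n$ are $\TT$-valued holomorphic functions on $\Omega^{r-1}$; uniqueness follows from uniqueness in the scalar case together with uniqueness of the $t$-expansion.

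The main obstacle I expect is bookkeeping the two convergences at once: one must check that the domain of validity of the scalar $u$-expansion, which a priori depends on each $g_i$ separately, can be taken uniform in $i$ so that the reassembled Laurent series converges in the Gauss norm on a common neighborhood of the cusp. This is where the tensor-product description must be used quantitatively, bounding $\|f_n\| = \sup_i |g_{i,n}|$ and matching it against the decay of $|u|^n$; once this uniformity is in hand, the interchange of the $\sum_i$ and $\sum_n$ summations is legitimate and the proof concludes.
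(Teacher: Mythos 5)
Your proposal follows exactly the route the paper takes: invoke Basson's scalar $u$-expansion (his Prop.\ 3.2.5) for the $\CC_\infty$-valued coefficients of the $t$-expansion and then extend scalars to $\TT$ via the observation that the $\TT$-valued holomorphic functions are Basson's tensored over $\CC_\infty$ with $\TT$. The extra care you take about a uniform domain of validity in $i$ is a detail the paper leaves implicit, but the argument is the same one.
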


\begin{definition}
We call the expansion guaranteed by the previous proposition for a $\Gamma_\alpha$-invariant function its \emph{$u$-expansion}.
\end{definition}

\subsection{Deformations of Drinfeld modular forms}

Observe that a rigid holomorphic function $f : \Omega^r \rightarrow \TT$ satisfying
\begin{equation}\label{modeq}
f(\gamma \bm{z}) = j(\gamma, \bm{z})^k \det(\gamma)^m f(\bm{z}) \text{ for all } \gamma \in \Gamma_r \text{ and } \bm{z} \in \Omega^r
\end{equation}
is necessarily $\Gamma_\alpha$-invariant, and hence has an expansion as in Prop. \ref{uexpprop}. Such a rigid holomorphic function is uniquely determined by its $u$-expansion since $\Omega^r$ is a connected rigid analytic space.

\begin{definition}
A rigid holomorphic function $f : \Omega^r \rightarrow \TT$ shall be called a \emph{weak deformation of Drinfeld modular forms} of \emph{weight} $k$ and \emph{type} $m \pmod{q-1}$ if $f$ satisfies both \eqref{modeq} and
\begin{equation}\label{holomeq}
f(\bm{z}) = \sum_{i \geq N} f_i(\widetilde{\bm{z}}) u_{\widetilde{\bm{z}}}(\bm{z} \bm{e}_1)^i, \begin{array}{l} \text{ for all } |\bm{z}|_\Im \text{ sufficiently large} \text{ and some } N, \\ \text{ both depending on } f. \end{array}
\end{equation}
We denote the $\TT$-module of such forms by $\bm{M}^!_{k,m}(\TT)$, and the $\CC_\infty$-subspace of $\CC_\infty$-valued forms simply by $\bm{M}^!_{k,m}$.

If, additionally, we have $N \geq 0$ in \eqref{holomeq}, then $f$ shall be called a \emph{deformation of Drinfeld modular forms} of the same weight and type. We denote the $\TT$-module of such forms by $\bm{M}_{k,m}(\TT)$, and the $\CC_\infty$-subspace of $\CC_\infty$-valued forms by $\bm{M}_{k,m}$.

Finally, we say that a deformation of Drinfeld modular forms $f : \Omega^r \rightarrow \TT$ is \emph{single-cuspidal} if the expansion required in \eqref{holomeq} begins at $N = 1$ with $f_1 \neq 0$.
\end{definition}

\begin{remark}
Given a deformation of Drinfeld modular forms $f$, as above, one has that the functions $f_i$ appearing in \eqref{holomeq} will be weak, $\TT$-valued Drinfeld modular forms on $\Omega^{r-1}$ for $\GL_{r-1}(A)$ of the same type as $f$ and respective weights $k-i$ by \cite[Prop. 3.2.7]{Bass}.
\end{remark}

Though the previous definition is necessary for what we do to follow, the next result shows that it does not give more than what Basson has already explored.

\begin{proposition} \label{ddmfthm}
For all weights $k$ and types $m \pmod{q-1}$ we have
\[\bm{M}^!_{k,m}(\TT) \cong \TT \otimes \bm{M}^!_{k,m}.\]
\begin{proof}
Given the finite dimensionality of the spaces of modular forms for $\GL_r(A)$ of a fixed weight and type, the proof of \cite[Lem. 13]{FPannals} goes through, essentially word for word, upon replacing the non-vanishing form $h$ in the proof (whose existence we are trying to demonstrate) with the non-vanishing form $\Delta$.
\end{proof}
\end{proposition}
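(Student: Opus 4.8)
The plan is to exhibit the natural comparison map explicitly and to verify it is bijective, treating injectivity as the easy direction and surjectivity as the substance of the statement. Define the $\TT$-linear map $\Phi : \TT \otimes_{\CC_\infty} \bm{M}^!_{k,m} \to \bm{M}^!_{k,m}(\TT)$ by $c \otimes g \mapsto c\,g$, where $c \in \TT$ is treated as a constant in the $\bm{z}$-variables and $g$ is $\CC_\infty$-valued; since both \eqref{modeq} and \eqref{holomeq} are preserved under multiplication by a constant from $\TT$, the image lands in $\bm{M}^!_{k,m}(\TT)$. For injectivity I would write an element of the domain as $\sum_{i=1}^{s} c_i \otimes g_i$ with the $g_i$ linearly independent over $\CC_\infty$ and suppose $\sum_i c_i g_i = 0$. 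For each $\zeta \in \CC_\infty$ with $|\zeta| \leq 1$ the continuous substitution $t \mapsto \zeta$ gives a ring homomorphism $\mathrm{ev}_\zeta : \TT \to \CC_\infty$; applying it, and noting it fixes the $\CC_\infty$-valued $g_i$, yields $\sum_i \mathrm{ev}_\zeta(c_i)\, g_i = 0$ in $\bm{M}^!_{k,m}$, whence $\mathrm{ev}_\zeta(c_i) = 0$ for all $i$. As a nonzero element of $\TT$ has only finitely many zeros in the closed unit disk, each $c_i$ vanishing at every such $\zeta$ forces $c_i = 0$.

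For surjectivity I would first reduce to forms holomorphic at infinity, using a nowhere-vanishing cuspidal form $\Delta$ whose existence is known independently of the form $h$ being constructed. Given $f \in \bm{M}^!_{k,m}(\TT)$ with $u$-expansion beginning at index $N$, the product $f\Delta^n$ is again $\TT$-valued with $u$-expansion beginning at $N + ne$, where $e > 0$ is the vanishing order of $\Delta$ at infinity; choosing $n$ large makes $F := f\Delta^n$ holomorphic at infinity, so $F \in \bm{M}_{w,m'}(\TT)$ for suitable weight $w$ and type $m'$. Granting the tensor decomposition for these genuine (cusp-holomorphic) spaces, write $F = \sum_j c_j\,G_j$ with $c_j \in \TT$ and $G_j \in \bm{M}_{w,m'}$; then $f = \sum_j c_j\,(G_j \Delta^{-n})$, and since $\Delta$ is nowhere vanishing on $\Omega^r$ each $G_j \Delta^{-n}$ is a $\CC_\infty$-valued weak form in $\bm{M}^!_{k,m}$, exhibiting $f$ in the image of $\Phi$.

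It remains to prove the decomposition for the finite-dimensional genuine spaces, which is where the real work lies and where the finite-dimensionality hypothesis enters. Fix a $\CC_\infty$-basis $G_1,\dots,G_d$ of $\bm{M}_{w,m'}$. Because distinct forms are distinct functions on $\Omega^r$, the point-evaluation functionals separate this space, so I can choose $\bm{z}_1,\dots,\bm{z}_d \in \Omega^r$ for which $\mathcal{G} := (G_j(\bm{z}_l))_{l,j}$ is invertible over $\CC_\infty$. Given $F \in \bm{M}_{w,m'}(\TT)$, set $c_j := \sum_l (\mathcal{G}^{-1})_{jl}\, F(\bm{z}_l) \in \TT$ and $\tilde F := \sum_j c_j G_j$, so that $\tilde F$ and $F$ agree at $\bm{z}_1,\dots,\bm{z}_d$ by construction. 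To conclude $F = \tilde F$ I would specialize at each $\zeta$ in the closed unit disk: each $\mathrm{ev}_\zeta(F - \tilde F)$ is a $\CC_\infty$-valued form in $\bm{M}_{w,m'}$ vanishing at all the $\bm{z}_l$, hence is zero by invertibility of $\mathcal{G}$, and vanishing of every specialization forces $F = \tilde F$ as in the injectivity argument.

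The crux of the whole argument is the passage from the merely pointwise family $\zeta \mapsto \mathrm{ev}_\zeta(F)$ of $\CC_\infty$-valued forms to honest $\TT$-coefficients, and I expect this to be the main obstacle. The matrix-inversion step is what resolves it cleanly: because $\mathcal{G}^{-1}$ has entries in $\CC_\infty$ while the values $F(\bm{z}_l)$ already lie in $\TT$, the coefficients $c_j$ land in $\TT$ automatically, so no analyticity in $t$ has to be checked by hand. The secondary subtlety is verifying that $\mathrm{ev}_\zeta$ carries $\TT$-valued forms to $\CC_\infty$-valued forms, which holds since the automorphy factors $j(\gamma,\bm{z})^k\det(\gamma)^m$ and the parameter at infinity are free of $t$, and since $\mathrm{ev}_\zeta$ is norm-decreasing and hence preserves uniform limits of rational functions.
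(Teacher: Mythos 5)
Your argument is correct and is essentially the paper's own: the paper simply cites Pellarin's \cite[Lem.~13]{FPannals} and notes that its proof goes through verbatim with the nowhere-vanishing form $\Delta$ in place of $h$, and what you have written out --- reduction to the cusp-holomorphic case by multiplying by a power of $\Delta$, then finite dimensionality, point evaluations giving an invertible matrix $(G_j(\bm{z}_l))$, and specialization $t \mapsto \zeta$ over the closed unit disk together with the finiteness of zero sets in $\TT$ --- is precisely that argument. No discrepancy to report.
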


\section{Existence and nowhere-vanishing} \label{AGFsect}
For this section we fix $\bm{z} \in \Omega^r$. Such a $\bm{z}$ gives rise to the lattice $\bm{z} A^r$ inside $\CC_\infty$ and to the Drinfeld module $\mathfrak{d}^{\bm{z}}$ of rank $r$ determined by
\[\theta \mapsto \mathfrak{d}^{\bm{z}}_\theta := \theta \tau^0 + g_1(\bm{z})\tau + \cdots + g_{r-1}(\bm{z})\tau^{r-1} + \Delta(\bm{z}) \tau^r \in \CC_\infty \{\tau\}.\] We write $\mathfrak{d}^{\bm{z}}_a$ for the image of $a \in A$ under $\mathfrak{d}^{\bm{z}}$.

As before, we denote the exponential function of the lattice $\bm{z} A^r$ by
\[\mathfrak{e}_{\bm{z}}(w) := \sum_{j \geq 0} \alpha_j(\bm{z}) w^{q^j},\]
and we remind the reader that $\mathfrak{d}^{\bm{z}}_a(\mathfrak{e}_{\bm{z}}(w)) = \mathfrak{e}_{\bm{z}}(a w)$, for all $a \in A$.

There are $r$ \emph{Anderson generating functions} associated to the lattice $\bm{z} A^r$. They are
non-zero elements of $\TT$ defined, for $k = 1,2,\dots,r$, by
\[\mathfrak{f}_k(\bm{z};t) := \sum_{j \geq 0} \mathfrak{e}_{\bm{z}}\left( \frac{\bm{z} \bm{e}_k }{\theta^{j+1}} \right) t^j. \]
The most basic property of these functions for us is summarized by the following result, due to Pellarin \cite{FPbourbaki}; see also \cite[Prop. 6.2 and Rem. 6.3]{EGPmonmath}. To follow, we occasionally suppress the notation $\bm{z}$ and $t$, considering them fixed.

\begin{lemma} \label{FPlem}
The functions $\mathfrak{f}_1,\mathfrak{f}_2, \dots, \mathfrak{f}_r$ are rigid-holomorphic functions $\Omega^r \rightarrow \TT$ which may be expanded, for $k = 1,\dots,r$, as
\[\mathfrak{f}_k(\bm{z};t) = \sum_{j \geq 0} \frac{\alpha_j(\bm{z})(\bm{z} \bm{e}_k)^{q^j}}{\theta^{q^j} - t}.\]
Further, the set $\{\mathfrak{f}_1,\mathfrak{f}_2, \dots, \mathfrak{f}_r\}$ gives a basis for the rank $r$ $\FF_q[t]$-submodule of $\TT$ of solutions to the $\tau$-difference equation
\[\mathfrak{d}_\theta(X) = t X.\]
\end{lemma}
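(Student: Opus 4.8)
The plan is to treat the three assertions in turn—the closed-form expansion, the $\tau$-difference equation, and the basis property—noting that the first two are direct computations while the genuine content lies in the last. For the expansion I would substitute the power series $\mathfrak{e}_{\bm{z}}(w)=\sum_{i\ge0}\alpha_i(\bm{z})w^{q^i}$ into the defining series $\mathfrak{f}_k=\sum_{j\ge0}\mathfrak{e}_{\bm{z}}(\bm{z}\bm{e}_k/\theta^{j+1})t^j$, obtaining a double sum of terms $\alpha_i(\bm{z})(\bm{z}\bm{e}_k)^{q^i}\theta^{-(j+1)q^i}t^j$. Since $\|t/\theta^{q^i}\|=q^{-q^i}<1$ in the Gauss norm, each inner geometric series $\sum_{j\ge0}t^j\theta^{-(j+1)q^i}$ converges to $(\theta^{q^i}-t)^{-1}$ in $\TT$, and the rapid decay of $\alpha_i(\bm{z})$ forced by the entireness of $\mathfrak{e}_{\bm{z}}$ justifies interchanging the order of summation; this yields the asserted formula. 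Rigid holomorphy on $\Omega^r$ follows because the $t$-coefficients $\mathfrak{e}_{\bm{z}}(\bm{z}\bm{e}_k/\theta^{j+1})$ are values of the holomorphically varying entire function $\mathfrak{e}_{\bm{z}}$ and tend to $0$ in Gauss norm uniformly on each affinoid $\Omega_n$, so the series defines a $\TT$-valued holomorphic function; alternatively one cites the realization of these functions in Basson's framework.

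For the difference equation I would use that $\mathfrak{d}_\theta=\theta\tau^0+g_1(\bm{z})\tau+\cdots+\Delta(\bm{z})\tau^r$ acts on $\TT$ coefficientwise in $t$, since $\tau$ fixes both $t$ and $\FF_q$, together with the functional equation $\mathfrak{d}^{\bm{z}}_\theta(\mathfrak{e}_{\bm{z}}(w))=\mathfrak{e}_{\bm{z}}(\theta w)$. Applying $\mathfrak{d}_\theta$ termwise to the defining series and reindexing gives
\[\mathfrak{d}_\theta(\mathfrak{f}_k)=\sum_{j\ge0}\mathfrak{e}_{\bm{z}}\!\left(\frac{\bm{z}\bm{e}_k}{\theta^{j}}\right)t^j=\mathfrak{e}_{\bm{z}}(\bm{z}\bm{e}_k)+t\,\mathfrak{f}_k.\]
Because $\bm{z}\bm{e}_k\in\bm{z}A^r$ is a point of the period lattice, the exponential vanishes there, $\mathfrak{e}_{\bm{z}}(\bm{z}\bm{e}_k)=0$, and we conclude $\mathfrak{d}_\theta(\mathfrak{f}_k)=t\,\mathfrak{f}_k$.

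The basis statement is where I expect the real work. First, the solution set in $\TT$ is an $\FF_q[t]$-module since $\FF_q[t]$ is exactly the ring of $\tau$-invariants of $\TT$, and each $\mathfrak{f}_k$ is a solution by the previous step. For $\FF_q[t]$-linear independence I would exploit the meromorphic continuation of $\mathfrak{f}_k$ in the variable $t$: the series $\sum_i\alpha_i(\bm{z})(\bm{z}\bm{e}_k)^{q^i}(\theta^{q^i}-t)^{-1}$ converges for every $t\neq\theta^{q^i}$ and has a simple pole at $t=\theta$ with $\lim_{t\to\theta}(\theta-t)\mathfrak{f}_k=\alpha_0(\bm{z})\,\bm{z}\bm{e}_k=\bm{z}\bm{e}_k$, using the normalization $\alpha_0=1$. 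Thus a relation $\sum_kP_k(t)\mathfrak{f}_k=0$ with $P_k\in\FF_q[t]$ forces, upon multiplying by $(\theta-t)$ and letting $t\to\theta$, the identity $\sum_kP_k(\theta)\,\bm{z}\bm{e}_k=0$; since the coordinates $\bm{z}\bm{e}_1,\dots,\bm{z}\bm{e}_r$ of $\bm{z}\in\Omega^r$ are $K_\infty$-linearly independent and $P_k(\theta)\in A\subseteq K_\infty$, all $P_k(\theta)=0$, whence $P_k=0$ because the substitution $t\mapsto\theta$ is an injection $\FF_q[t]\hookrightarrow A$.

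Finally, to see that these $r$ solutions exhaust the whole solution module, I would invoke the standard theory of $\tau$-difference equations: rewriting the relation as $\Delta\,\tau^r X+\cdots+(\theta-t)X=0$ with invertible leading coefficient $\Delta$ and trailing coefficient the unit $\theta-t\in\TT$ shows the equation has order $r$, so its solution space over the constant field $\FF_q(t)$ of $\mathrm{Frac}(\TT)$ has dimension at most $r$; the $r$ independent functions $\mathfrak{f}_k$ therefore form a basis there, and a saturation argument using that every solution continues meromorphically with poles only among the $\theta^{q^i}$ upgrades this to an $\FF_q[t]$-basis of the integral solution module in $\TT$. I expect this last point to be the main obstacle—namely the dimension bound together with the passage from an $\FF_q(t)$-basis over the fraction field to an $\FF_q[t]$-basis of the module of solutions lying in $\TT$—whereas the explicit expansion and the verification of the difference equation are routine.
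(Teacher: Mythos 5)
The paper offers no proof of this lemma at all: it is quoted from Pellarin's Bourbaki seminar and from El-Guindy--Papanikolas \cite[Prop.~6.2, Rem.~6.3]{EGPmonmath}, so your attempt has to be measured against those sources. Most of it holds up. The partial-fraction expansion (substitute $\mathfrak{e}_{\bm{z}}(w)=\sum_i\alpha_i w^{q^i}$, sum the geometric series in $t/\theta^{q^i}$, interchange the sums using the entireness of $\mathfrak{e}_{\bm{z}}$), the verification of $\mathfrak{d}_\theta(\mathfrak{f}_k)=t\,\mathfrak{f}_k$ from $\mathfrak{d}_\theta\circ\mathfrak{e}_{\bm{z}}=\mathfrak{e}_{\bm{z}}(\theta\,\cdot)$ and $\mathfrak{e}_{\bm{z}}(\bm{z}\bm{e}_k)=0$, and the $\FF_q[t]$-linear independence via the residue $\bm{z}\bm{e}_k$ at $t=\theta$ together with the $K_\infty$-linear independence of the coordinates of $\bm{z}$ are all correct and are exactly the standard arguments. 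The bound of $r$ on the $\FF_q(t)$-dimension of the solution space inside $\mathbb{L}$ (whose $\tau$-invariants are $\FF_q(t)$) is also fine and meshes with the paper's Moore-determinant discussion.

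The gap is exactly where you predicted it, but the repair you sketch does not work. Granting that any solution $X\in\TT$ is $\sum_k c_k(t)\mathfrak{f}_k$ with $c_k\in\FF_q(t)$, you must show $c_k\in\FF_q[t]$. The relevant obstruction has nothing to do with the poles $\theta^{q^i}$ of the $\mathfrak{f}_k$, which lie \emph{outside} the closed unit disc and are invisible to membership in $\TT$: a coefficient $c_k\in\FF_q(t)\setminus\FF_q[t]$ has its poles at points of $\overline{\FF_q}$, which lie \emph{inside} the closed unit disc where every $\mathfrak{f}_j$ is holomorphic, and you must rule out cancellation of these poles in the sum. That amounts to showing that $\mathfrak{f}_1(\bm{z};\zeta),\dots,\mathfrak{f}_r(\bm{z};\zeta)$ remain linearly independent over $\FF_q(\zeta)$ for every $\zeta\in\overline{\FF_q}$, i.e.\ that $\det\Psi$ does not vanish at any root of unity; the difference equation $\tau(\delta\det\Psi)=(t-\theta)\delta\det\Psi$ only forbids zeros at points of the disc with infinite Frobenius orbit, since the orbit of a root of unity is a finite cycle and the multiplicities propagate consistently around it. The cited proof sidesteps this entirely: writing $X=\sum_j x_j t^j$, the equation forces $\mathfrak{d}_\theta(x_0)=0$ and $\mathfrak{d}_\theta(x_j)=x_{j-1}$, so the $x_j$ form a compatible system of $\theta^{j+1}$-division points of $\mathfrak{d}^{\bm{z}}$, i.e.\ an element of $\varprojlim \Lambda/\theta^{j+1}\Lambda$; the decay $|x_j|\to 0$ needed for $X\in\TT$ then forces this element to come from an honest lattice vector $\bm{z}\bm{a}$ with $\bm{a}\in A^r$, whence $X=\sum_k\chi_t(a_k)\mathfrak{f}_k$ with polynomial coefficients. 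You should either run this torsion-point argument or prove the nonvanishing of $\det\Psi$ on $\overline{\FF_q}$ by separate means; as written, the ``saturation'' step is an assertion, not a proof.
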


\begin{remark} \label{ATrmk}
Let $\lambda_\theta$ be a $(q-1)$-th root of $-\theta$. When $r = 1$, \emph{the Anderson-Thakur function},
\[\omega(t) := \lambda_\theta \prod_{i \geq 0} (1 - t \theta^{-q^i})^{-1},\]
which debuted in in \cite{AT90}, generates the free $\FF_q[t]$-submodule of $\TT$ of solutions to
\[\tau(X) = (t - \theta)X.\]

In particular, any non-zero solution in $\TT$ of the previous $\tau$-difference equation necessarily has a simple pole at $t = \theta$.
\end{remark}

As a corollary of Lemma \ref{FPlem}, for all $a \in A$ and $k = 1,2,\dots,r$, we have,
\begin{equation} \label{eigenprop} \mathfrak{d}_a(\mathfrak{f}_k) = \chi_t(a) \mathfrak{f}_k,\end{equation}
where $\chi_t(a)$ denotes the image of $a \in A$ under the map $\chi_t: A \rightarrow \FF_q[t] \subseteq \TT$ of $\FF_q$-algebras determined by $\theta \mapsto t$.

We extend the action of $\tau$ to matrices with entries in $\TT$ by $\tau[a_{ij}] := [\tau(a_{ij})]$. Let
\begin{equation*}
\Phi := \left[ \begin{array}{cccc} 0 & 1 & \cdots & 0 \\ \vdots & \vdots & \ddots & \vdots \\ 0 & 0 & \cdots & 1 \\ (t - \theta)/\Delta & -g_1/\Delta & \cdots & -g_{r-1}/\Delta \end{array} \right] \text{ and } \Psi := \left[ \begin{array}{c} \mathcal{F} \\ \tau\mathcal{F} \\ \vdots \\ \tau^{r-1}\mathcal{F} \end{array} \right],
\end{equation*}
with $\mathcal{F} :=  (\mathfrak{f}_1, \mathfrak{f}_2, \dots, \mathfrak{f}_r)$. The previous lemma gives the following matrix identity which will be useful to follow,
\begin{equation} \label{RATdiffeq}
\tau\Psi = \Phi \Psi;
\end{equation}
see \cite[\S 4.2.2]{FPbourbaki} for the connection of \eqref{RATdiffeq} with the dual Anderson $t$-motive associated to $\mathfrak{d}^{\bm{z}}$.

\subsection{Modularity of $\mathcal{F}$}
Let $\rho_t : M_{k \times l}(A) \rightarrow M_{k \times l}(\FF_q[t])$ be the map defined by
\[[a_{ij}] \mapsto [\chi_t(a_{ij})], \text{ for all } [a_{ij}] \in M_{k \times l}(A).\]

The next result is the obvious extension to $\GL_r(A)$ of Pellarin's \cite[Lem. 2.4]{FPcrelles}. It also generalizes Lemma 4.4 and equation (4.3) of \cite{Gekcompo}.

\begin{lemma} \label{AGFcoordeqnlem}
The following identity holds for all $\gamma \in \Gamma_r$ and $\bm{z} \in \Omega^r$,
\[\mathcal{F}(\gamma \bm{z}) = j(\gamma,\bm{z})^{-1} \mathcal{F}(\bm{z}) \rho_t(\gamma)^{-1}.\]
\begin{proof}
On each coordinate we have
\begin{eqnarray*}
\mathfrak{f}_k(\gamma \bm{z};t) &=& \sum_{j \geq 0} \frac{\alpha_j(\gamma \bm{z}) (\gamma\bm{z} \bm{e}_k)^{q^j}}{\theta^{q^j} - t} \\
&=& \sum_{j \geq 0} \frac{ j(\gamma,\bm{z})^{q^j - 1}\alpha_j(\bm{z}) ( j(\gamma,\bm{z})^{-1} \bm{z} \gamma^{-1} \bm{e}_k )^{q^j}}{\theta^{q^j} - t} \\
&=& j(\gamma,\bm{z})^{- 1} \mathcal{F}(z) \rho_t(\gamma)^{-1} \bm{e}_k.
\end{eqnarray*}
From the first to the second line we use the modularity properties of the $\alpha_j$ from \cite[Cor. 3.4.9]{Bass}, and from the second to the third we use $\FF_q$-linearity and \eqref{eigenprop} for the Anderson generating functions.
\end{proof}
\end{lemma}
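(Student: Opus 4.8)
The plan is to verify the identity one coordinate at a time, that is, to show $\mathfrak{f}_k(\gamma\bm{z};t) = j(\gamma,\bm{z})^{-1}\mathcal{F}(\bm{z})\rho_t(\gamma)^{-1}\bm{e}_k$ for each $k$, since stacking these columns recovers the asserted matrix identity. I would start from the defining series $\mathfrak{f}_k(\gamma\bm{z};t) = \sum_{n\geq 0}\mathfrak{e}_{\gamma\bm{z}}((\gamma\bm{z})\bm{e}_k/\theta^{n+1})t^n$ and unwind the action of $\gamma$. From its definition one has $(\gamma\bm{z})\bm{e}_k = j(\gamma,\bm{z})^{-1}\bm{z}\gamma^{-1}\bm{e}_k$, while the lattice attached to $\gamma\bm{z}$ is $j(\gamma,\bm{z})^{-1}\bm{z}A^r$ (because $\gamma^{-1}A^r = A^r$), so the homogeneity $\mathfrak{e}_{cL}(w) = c\,\mathfrak{e}_L(w/c)$ of exponentials pulls a single factor of $j(\gamma,\bm{z})^{-1}$ out front. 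Equivalently, one may invoke the modularity of the coefficients recorded in \cite[Cor.\ 3.4.9]{Bass}, namely $\alpha_n(\gamma\bm{z}) = j(\gamma,\bm{z})^{q^n-1}\alpha_n(\bm{z})$, and feed it into the closed form of Lemma \ref{FPlem}; either formulation reduces the claim to the single clean identity $\sum_{n\geq 0}\mathfrak{e}_{\bm{z}}(\bm{z}\gamma^{-1}\bm{e}_k/\theta^{n+1})t^n = \mathcal{F}(\bm{z})\rho_t(\gamma)^{-1}\bm{e}_k$, with $j(\gamma,\bm{z})$ now absent.

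For the remaining identity I would exploit that $\gamma\in\GL_r(A)$ forces $\gamma^{-1}$ to have entries in $A$ (its determinant lies in $\FF_q^\times$), so that $\bm{z}\gamma^{-1}\bm{e}_k = \sum_{l=1}^r (\gamma^{-1})_{lk}\,\bm{z}\bm{e}_l$ is an $A$-linear combination of the lattice generators $\bm{z}\bm{e}_l$. The $\FF_q$-additivity of $\mathfrak{e}_{\bm{z}}$ splits the series over $l$, reducing matters to the single-generator statement $\sum_{n\geq 0}\mathfrak{e}_{\bm{z}}(a\,\bm{z}\bm{e}_l/\theta^{n+1})t^n = \chi_t(a)\,\mathfrak{f}_l(\bm{z};t)$ for $a\in A$. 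Recognizing the left-hand side as $\mathfrak{d}_a(\mathfrak{f}_l)$ --- via the functional equation $\mathfrak{d}_a(\mathfrak{e}_{\bm{z}}(w)) = \mathfrak{e}_{\bm{z}}(aw)$ and the compatibility of $\mathfrak{d}_a$ with the $\tau$-action on $\TT$ --- this is precisely the eigenproperty \eqref{eigenprop}. Summing back over $l$ yields $\sum_l \chi_t((\gamma^{-1})_{lk})\mathfrak{f}_l(\bm{z};t)$, which is the $k$-th column of $\mathcal{F}(\bm{z})\rho_t(\gamma^{-1})$; since $\rho_t$ is a ring homomorphism on matrices and $\det\gamma\in\FF_q^\times$ is fixed by $\chi_t$, we have $\rho_t(\gamma^{-1}) = \rho_t(\gamma)^{-1}$, completing the reduction.

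The main obstacle is the single-generator identity $\sum_n\mathfrak{e}_{\bm{z}}(a\bm{z}\bm{e}_l/\theta^{n+1})t^n = \chi_t(a)\mathfrak{f}_l$, together with the temptation to prove it by a naive term-by-term rearrangement of the closed form $\sum_n\alpha_n(\bm{z})w^{q^n}/(\theta^{q^n}-t)$. That approach fails: the Frobenius twist sends $a\in A$ to $a^{q^n}$, which is not the coefficient $\chi_t(a)$, so $\chi_t(a)$ cannot be factored out of the sum coefficientwise. The identity is instead a genuine consequence of the $\tau$-difference structure. Reducing to the generators $a=\theta$ and $a\in\FF_q$, the case $a=\theta$ follows by reindexing $n\mapsto n-1$ together with the vanishing $\mathfrak{e}_{\bm{z}}(\bm{z}\bm{e}_l)=0$ at the lattice point (producing the multiplier $t = \chi_t(\theta)$), while the $\FF_q$-case is immediate from $\FF_q$-linearity; additivity and $\theta$-multiplicativity in $a$ then propagate the identity to all of $A=\FF_q[\theta]$. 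Once this eigenproperty is in hand, the rest of the argument is bookkeeping with the factor of automorphy and the homomorphism $\rho_t$.
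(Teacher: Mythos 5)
Your proposal is correct and follows essentially the same route as the paper: apply the modularity of the exponential coefficients $\alpha_j$ (equivalently, the homogeneity of the lattice exponential) to extract the factor $j(\gamma,\bm{z})^{-1}$, then use $\FF_q$-linearity together with the eigenproperty \eqref{eigenprop} to convert the $A$-linear combination $\bm{z}\gamma^{-1}\bm{e}_k$ into the $\FF_q[t]$-linear combination $\mathcal{F}(\bm{z})\rho_t(\gamma)^{-1}\bm{e}_k$. Your extra remarks --- that a naive coefficientwise rearrangement of the closed-form series would produce $a^{q^j}$ rather than $\chi_t(a)$, and the direct verification of the eigenproperty via the generators $a=\theta$ and $a\in\FF_q$ --- correctly identify and fill in the detail the paper compresses into the citation of \eqref{eigenprop}.
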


\subsection{Moore determinants}\label{mooredetsect}
Pellarin already noted in \cite{FPcrelles} that $\det \Psi(\bm{z})$ was non-zero and invertible in the fraction field of $\TT$ for all $\bm{z} \in \Omega^r$. We make a quick digression into the theory of Moore determinants to sketch a proof of this non-vanishing. Let $\mathbb{L}$ be the fraction field of the integral domain $\TT$. The $\FF_q[t]$-algebra action of $\tau$ on $\TT$ extends to an $\FF_q(t)$-algebra action on $\mathbb{L}$, and the fixed field of this action is exactly $\FF_q(t)$. The arguments of \cite[\S 1.3]{Gbook} readily extend to our setting, and we have the following result.

\begin{lemma} \label{mooredetlem}
For a Moore matrix
\[\mathcal{M} := \left[ \begin{array}{rrrr} m_1 & m_2 & \cdots & m_r \\ \tau(m_1) & \tau(m_2) & \cdots & \tau(m_r) \\ \vdots & \vdots & & \vdots \\ \tau^{r-1}(m_1) & \tau^{r-1}(m_2) & \cdots & \tau^{r-1}(m_r) \end{array} \right],\]
with $m_1, m_2, \dots, m_r \in \mathbb{L}$, we have $\det\mathcal{M} \neq 0$ if and only if $m_1,m_2,\dots,m_r$ are $\FF_q(t)$-linearly independent.
\end{lemma}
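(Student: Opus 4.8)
The plan is to prove the statement for an arbitrary injective $\FF_q(t)$-algebra endomorphism $\tau$ of the field $\mathbb{L}$ whose fixed field is exactly $\FF_q(t)$, arguing by induction on $r$; this is the form in which the arguments of \cite[\S 1.3]{Gbook} adapt, the only structural inputs being the injectivity of $\tau$ and the precise identification of its fixed field recorded just above. One direction is immediate: if $\sum_j c_j m_j = 0$ for some $c_j \in \FF_q(t)$ not all zero, then applying each $\tau^{i}$ and using that $\tau$ fixes every $c_j$ and is multiplicative gives $\sum_j c_j \tau^{i}(m_j) = 0$ for all $i$, so the columns of $\mathcal{M}$ are $\mathbb{L}$-linearly dependent and $\det \mathcal{M} = 0$. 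It remains to prove the converse, that $\det\mathcal{M} = 0$ forces an $\FF_q(t)$-linear dependence among the $m_j$.

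For the converse I would induct on $r$, the case $r = 1$ being trivial. Assuming the result for $r-1$, suppose $\det\mathcal{M} = 0$, where I index the rows of $\mathcal{M}$ by the powers $\tau^0, \dots, \tau^{r-1}$. If $m_1, \dots, m_{r-1}$ are already $\FF_q(t)$-dependent we are done, so assume they are independent; by the inductive hypothesis their $(r-1)\times(r-1)$ Moore determinant $\Delta_{r-1}$ is nonzero. Invertibility of the corresponding block lets me use only the first $r-1$ rows of $\mathcal{M}$ to solve for unique $a_1, \dots, a_{r-1} \in \mathbb{L}$ with $\tau^{i}(m_r) = \sum_{j=1}^{r-1} a_j \tau^{i}(m_j)$ for $i = 0, \dots, r-2$. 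Since $\det \mathcal{M} = 0$, the very same relation must then hold in the last row as well, so that
\[\tau^{i}(m_r) = \sum_{j=1}^{r-1} a_j \tau^{i}(m_j) \quad\text{for all } i = 0, 1, \dots, r-1.\]

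The crux of the argument, and the step I expect to be the main obstacle, is to show that the coefficients $a_j$ in fact lie in the fixed field $\FF_q(t)$. Applying $\tau$ to the identity for indices $i = 0, \dots, r-2$ and subtracting the identity for indices $i = 1, \dots, r-1$ yields $\sum_{j=1}^{r-1} (\tau(a_j) - a_j)\, \tau^{i}(m_j) = 0$ for $i = 1, \dots, r-1$. The coefficient matrix of this homogeneous system, $(\tau^{i}(m_j))_{1 \le i,j \le r-1}$, is precisely $\tau$ applied entrywise to the Moore matrix of $m_1, \dots, m_{r-1}$, so its determinant is $\tau(\Delta_{r-1})$, which is nonzero because $\Delta_{r-1} \neq 0$ and $\tau$ is injective. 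Hence $\tau(a_j) = a_j$ for every $j$, so each $a_j$ belongs to $\FF_q(t)$, and the relation at $i = 0$ exhibits $m_r = \sum_j a_j m_j$ as the desired $\FF_q(t)$-linear dependence. The feature distinguishing this from the classical Moore determinant is that $\tau$ is not a power map, so no product expansion into $\FF_q$-linear forms is available and the independence criterion must be taken over the full fixed field $\FF_q(t)$; it is exactly the injectivity of $\tau$ together with the identification of its fixed field that makes this linear-algebra induction go through.
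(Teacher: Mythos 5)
Your proof is correct and complete. Note, though, that the paper does not actually write out an argument here: it simply asserts that ``the arguments of \cite[\S 1.3]{Gbook} readily extend to our setting.'' The argument in Goss's \S 1.3 for the classical Moore determinant runs through the product expansion of $\det\mathcal{M}$ into $\FF_q$-linear forms, which, as you rightly observe, is unavailable here because $\tau$ is not the $q$-power map on $\mathbb{L}$ and the fixed field $\FF_q(t)$ is infinite; so the citation cannot be taken entirely literally, and what one must substitute is exactly the difference-Wronskian induction you give. Your two nontrivial steps check out: the uniqueness of the coefficients $a_j$ obtained from the invertible $(r-1)\times(r-1)$ block, combined with $\det\mathcal{M}=0$ and the $\mathbb{L}$-independence of the first $r-1$ columns, does force the relation $\tau^i(m_r)=\sum_j a_j\tau^i(m_j)$ to persist in the last row; and the shift-and-subtract trick produces a homogeneous system whose coefficient matrix has determinant $\tau(\Delta_{r-1})\neq 0$ (determinants commute with the ring endomorphism $\tau$, and $\tau$ is injective), whence $\tau(a_j)=a_j$ and $a_j\in\FF_q(t)$ by the identification of the fixed field recorded in the paper just before the lemma. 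What your route buys over the cited one is genuine generality --- it works for any injective endomorphism with prescribed fixed field --- at the cost of losing the explicit factorization of $\det\mathcal{M}$, which is not needed for this lemma.
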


\subsection{Non-vanishing of $\det \Psi$ and cohomology}
This paragraph should be compared with \cite{Gosscoho} of Goss, where these ideas are attributed to Anderson.

Let $\mathfrak{d}$ be a fixed Drinfeld module of rank $r$ with associated exponential function $e_{\mathfrak{d}}$. In the $A = \FF_q[\theta]$ setting in which we are working, one may show that the $r$ $\FF_q$-linear biderivations $\delta^{(0)}, \delta^{(1)},\dots,\delta^{(r-1)}$ determined by mapping $\theta$ to $\mathfrak{d}_\theta - \theta, \tau, \dots, \tau^{r-1} \in \CC_\infty\{\tau\}$, respectively, form a basis for the de Rham cohomology $H^1_{\text{DR}}(\mathfrak{d})$ of $\mathfrak{d}$ as a $\CC_\infty$-vector space; see \cite{Gekcrelles} for definitions and this remark. Uniquely associated to these $r$ biderivations are the $\FF_q$-linear entire functions $F_{0},F_{1},\dots,F_{r-1}$ given by
\[e_{\mathfrak{d}}(w) - w, \sum_{i \geq 0} \theta^i e_{\mathfrak{d}}(w \theta^{-i-1})^q,\dots,\sum_{i \geq 0} \theta^i e_{\mathfrak{d}}(w \theta^{-i-1})^{q^{r-1}},\]
respectively. These functions are additionally $A$-linear upon restriction to the kernel $\Lambda$ in $\CC_\infty$ of $e_{\mathfrak{d}}$, which we think of as the first homology group of $\mathfrak{d}$. The association $\delta^{i} \mapsto F_i$ gives a map, the \emph{de Rham morphism} or \emph{cycle class map}, $\text{DR}: H^1_{\text{DR}}(\mathfrak{d}) \rightarrow \text{Hom}_A (\Lambda, \CC_\infty)$ of $\CC_\infty$-vector spaces.

\begin{corollary}
For all Drinfeld modules of rank $r \geq 2$, the de Rham morphism $\emph{DR}$ is an isomorphism.
\begin{proof}
%Give set up here, or above.
Let $\mathfrak{d}$ be a Drinfeld module of rank $r$, and let $\bm{z} \in \Omega^r$ represent the isogeny class of $\mathfrak{d}$. As we have indicated above the functions $\mathfrak{f}_1(\bm{z}), \mathfrak{f}_2(\bm{z}), \dots, \mathfrak{f}_r(\bm{z})$ are $\FF_q[t]$-linearly independent elements of $\TT$.
Thus by Lemma \ref{mooredetlem}, we have that $\det\Psi(\bm{z})$ is a non-zero element of $\TT$.
\emph{Ad hoc}, we let $\delta$ be a fixed $(q-1)$-th root in $\CC_\infty$ of $\Delta$. Then, by \eqref{RATdiffeq} we have that $\tau (\delta \det \Psi(\bm{z})) = (t - \theta) \delta \det \Psi(\bm{z})$.
By Remark \ref{ATrmk}, we deduce that $\det\Psi(\bm{z})$ has a simple pole at $t = \theta$; in particular, the residue of this pole is non-zero. One easily checks that the residue at $t = \theta$ of $\det\Psi(\bm{z})$ is equal to the determinant of $[F_i(z_j)]_{i,j}$. Thus we deduce that $\det[F_i(z_j)]_{i,j}$ is a non-zero element of $\CC_\infty$, which implies the claim.
\end{proof}
\end{corollary}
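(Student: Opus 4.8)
The plan is to reduce the statement to the non-vanishing of a single determinant, to extract that determinant as the residue of $\det\Psi$ at $t=\theta$, and then to force the residue to be non-zero using the $\tau$-difference equation \eqref{RATdiffeq} together with the Anderson--Thakur pole phenomenon of Remark \ref{ATrmk}. First I would observe that both the source and target of $\mathrm{DR}$ are $\CC_\infty$-vector spaces of dimension $r$: the de Rham cohomology because $\delta^{(0)},\dots,\delta^{(r-1)}$ is a basis by hypothesis, and $\mathrm{Hom}_A(\Lambda,\CC_\infty)\cong\CC_\infty^r$ because $\Lambda$ is a free $A$-module of rank $r$. Hence it suffices to prove that $\mathrm{DR}$ is non-singular. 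Fixing an $A$-basis $\lambda_1,\dots,\lambda_r$ of $\Lambda$, the map $\mathrm{DR}$ is represented, in the basis $\{\delta^{(i)}\}$ and the basis of $\mathrm{Hom}_A(\Lambda,\CC_\infty)$ dual to $\{\lambda_j\}$, by the matrix $[F_i(\lambda_j)]_{i,j}$; thus the whole problem collapses to showing $\det[F_i(\lambda_j)]_{i,j}\neq 0$.

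Next I would choose $\bm z\in\Omega^r$ representing the isogeny class of $\mathfrak d$, take $\Lambda=\bm z A^r$ with basis $\lambda_j=\bm z\bm e_j=z_j$, and tie the entries $F_i(z_j)$ directly to the Anderson generating functions. Expanding $e_{\mathfrak d}(z_k\theta^{-m-1})^{q^i}$ and summing the resulting geometric series in $\theta$ identifies $F_i(z_k)$ with $\tau^i\mathfrak f_k(\bm z;t)\big|_{t=\theta}$ for $i\geq 1$, while $F_0(z_k)=e_{\mathfrak d}(z_k)-z_k=-z_k$ (using $z_k\in\Lambda=\ker e_{\mathfrak d}$) is precisely the residue at $t=\theta$ of the simple pole of $\mathfrak f_k$. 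Since only the top row $\mathcal F$ of $\Psi$ is singular at $t=\theta$ and its pole there is simple, expanding the determinant along that row yields $\mathrm{Res}_{t=\theta}\det\Psi(\bm z)=\det[F_i(z_j)]_{i,j}$.

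Finally I would force this residue to be non-zero. The $\FF_q[t]$-linear independence of $\mathfrak f_1,\dots,\mathfrak f_r$ from Lemma \ref{FPlem}, combined with the Moore determinant criterion of Lemma \ref{mooredetlem}, already gives $\det\Psi(\bm z)\neq 0$. Letting $\delta$ denote a $(q-1)$-th root of $\Delta$ and passing to determinants in \eqref{RATdiffeq} produces $\tau(\delta\det\Psi(\bm z))=(t-\theta)\,\delta\det\Psi(\bm z)$, so $\delta\det\Psi(\bm z)$ is a non-zero solution of the Anderson--Thakur difference equation; by Remark \ref{ATrmk} it, and therefore $\det\Psi(\bm z)$, has a simple pole at $t=\theta$ with non-zero residue. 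Combined with the identification of the previous paragraph this gives $\det[F_i(z_j)]_{i,j}\neq 0$, and hence $\mathrm{DR}$ is an isomorphism. The step I expect to be the main obstacle is the residue identification $\mathrm{Res}_{t=\theta}\det\Psi=\det[F_i(z_j)]$: it is the bridge translating the cohomological pairing into the analytic language of Anderson generating functions, and getting the normalizations and the single-simple-pole bookkeeping right is where the real care lies. Once it is in place, the non-vanishing is delivered almost formally by the difference equation and the Anderson--Thakur pole.
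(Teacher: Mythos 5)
Your proposal is correct and follows essentially the same route as the paper: non-vanishing of $\det\Psi$ via Moore determinants, the difference equation $\tau(\delta\det\Psi)=(t-\theta)\delta\det\Psi$ together with Remark \ref{ATrmk} to force a simple pole at $t=\theta$ with non-zero residue, and the identification of that residue with $\det[F_i(z_j)]_{i,j}$. You merely make explicit two steps the paper leaves to the reader, namely the matrix representation of $\mathrm{DR}$ and the residue computation via expansion along the only singular row of $\Psi$.
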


\subsection{Existence and nowhere-vanishing}
For $a_1,a_2, \dots, a_r \in \TT$, define the $r \times r$ diagonal matrices $\text{diag}(a_1,a_2,\dots,a_r) := [b_{ij}]$, with $b_{ii} = a_i$ and $b_{ij} = 0$, for $i \neq j$.

\begin{theorem} \label{nowherevan}
There exists a nowhere-vanishing, single cuspidal Drinfeld modular form for $\GL_r(A)$ of weight $1+q+\cdots+q^{r-1}$ and type $1$.

In particular, $\bm{M}_{\frac{q^r-1}{q-1},1}$ is a $\CC_\infty$-vector space of dimension one.
\end{theorem}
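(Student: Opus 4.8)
The plan is to construct the form as the reciprocal of the residue at $t=\theta$ of $\det\Psi$, so that its modularity and non-vanishing are read off directly from the machinery already assembled, leaving only the behaviour at infinity to be analysed.

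First I would extract the transformation law of $\det\Psi$. Applying $\tau^i$ to Lemma \ref{AGFcoordeqnlem} and using that the entries of $\rho_t(\gamma)^{-1}$ lie in $\FF_q[t]$ (hence are $\tau$-fixed) while $\tau^i j(\gamma,\bm{z})^{-1}=j(\gamma,\bm{z})^{-q^i}$, one finds
\[\Psi(\gamma\bm{z})=\mathrm{diag}\!\left(j^{-1},j^{-q},\dots,j^{-q^{r-1}}\right)\Psi(\bm{z})\,\rho_t(\gamma)^{-1},\qquad j:=j(\gamma,\bm{z}).\]
Since $\det\rho_t(\gamma)=\chi_t(\det\gamma)=\det\gamma\in\FF_q^\times$ (the units of $A$), taking determinants gives
\[\det\Psi(\gamma\bm{z})=j(\gamma,\bm{z})^{-\frac{q^r-1}{q-1}}(\det\gamma)^{-1}\det\Psi(\bm{z}).\]
By the proof of the corollary above, $\det\Psi(\bm{z})$ is a nonzero element of $\TT$ whose continuation (via \eqref{RATdiffeq} and Remark \ref{ATrmk}) has a simple pole at $t=\theta$ with residue $L(\bm{z}):=\det[F_i(z_j)]_{i,j}$, a nowhere-vanishing, rigid holomorphic function $\Omega^r\to\CC_\infty$. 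As the automorphy factor above is independent of $t$, the residue inherits it, so $L(\gamma\bm{z})=j(\gamma,\bm{z})^{-\frac{q^r-1}{q-1}}(\det\gamma)^{-1}L(\bm{z})$. Hence $h:=L^{-1}$ is a nowhere-vanishing rigid holomorphic function on $\Omega^r$ satisfying \eqref{modeq} with $k=\frac{q^r-1}{q-1}=1+q+\cdots+q^{r-1}$ and $m=1$.

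The remaining, and I expect the only substantial, step is to show that $h$ is holomorphic at infinity and in fact single-cuspidal, i.e. that its $u$-expansion begins exactly at order one. I would study the degeneration of the lattice $\bm{z}A^r=z_rA\oplus\Lambda(\widetilde{\bm{z}})$ as $|\bm{z}|_\Im\to\infty$, tracking how the quasi-period matrix $[F_i(z_j)]$ and the parameter $u_{\widetilde{\bm{z}}}(z_r)$ factor through the rank $r-1$ data. The target is to prove that $L$ has a simple pole in $u$ at the cusp whose leading coefficient is a nonzero, Frobenius-twisted Legendre-type form on $\Omega^{r-1}$; equivalently $h$ has a simple zero there, so $N=1$ and $f_1\neq0$ in \eqref{holomeq}. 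This is precisely the computation Gekeler carried out for $r=2$, and it is where the deformations of vectorial Eisenstein series enter, giving enough control on the expansion to identify the leading coefficient and confirm its non-vanishing.

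Finally, the dimension statement is formal given the above. For any $g\in\bm{M}_{k,1}$ the quotient $g/h$ is a $\Gamma_r$-invariant ($k=0$, $m=0$) rigid holomorphic function on $\Omega^r$, with $u$-expansion of lowest order $N_g-1\geq-1$ since $h$ is single-cuspidal. A constant-term argument shows $g$ is cuspidal (its $u^0$-coefficient is a type-$1$ form on $\Omega^{r-1}$, which is obstructed by the interplay of its weight and type upon descending the tower of boundary components), whence $N_g\geq1$ and $g/h$ is holomorphic at infinity; it therefore lies in $\bm{M}_{0,0}=\CC_\infty$ and is constant. Thus multiplication by $h$ identifies $\CC_\infty$ with $\bm{M}_{k,1}$, giving $\dim_{\CC_\infty}\bm{M}_{\frac{q^r-1}{q-1},1}=1$.
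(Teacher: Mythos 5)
Your construction of the form and the verification of weak modularity and nowhere-vanishing are sound and match the paper's: the transformation law you derive for $\det\Psi$ is exactly the one used there, and the non-vanishing of the residue $\det[F_i(z_j)]_{i,j}$ is the content of the cycle-class corollary. But the step you yourself flag as ``the only substantial'' one --- holomorphy at infinity and single-cuspidality --- is left as a plan rather than a proof, and the plan you sketch (tracking the degeneration of the quasi-period matrix $[F_i(z_j)]$ as the lattice $\bm{z}A^r$ splits off a rank-one piece, ``precisely the computation Gekeler carried out for $r=2$'') is exactly the computation this paper was written to avoid: Gekeler's analysis does not extend painlessly to $r\geq 3$, and completing that missing cusp analysis is the paper's stated contribution. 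So as written there is a genuine gap at the decisive point.

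The paper closes that gap by a different and much cheaper device, which your early passage to the residue at $t=\theta$ forfeits. Staying with the $\TT$-valued (indeed $\mathbb{L}$-valued) function $\det\Psi^{-1}$, one reads off from the difference equation \eqref{RATdiffeq} that $\tau(\delta\det\Psi)=(t-\theta)\,\delta\det\Psi$ for $\delta^{q-1}=\Delta$, whence the norm identity $\|\det\Psi^{-1}(\bm{z})\|^{q-1}=\|\Delta(\bm{z})/(t-\theta)\|$. Since Basson's product formula gives that $\Delta$ vanishes to order exactly $q-1$ in $u_{\widetilde{\bm{z}}}$, it follows that $\|\det\Psi^{-1}\|$ decays exactly like $|u_{\widetilde{\bm{z}}}(z_r)|$ as $|\bm{z}|_\Im\to\infty$, which simultaneously gives holomorphy at infinity and single-cuspidality with no analysis of the quasi-periods at the cusp; Prop.~\ref{ddmfthm} then descends from $\mathbb{L}$-coefficients to a $\CC_\infty$-valued form. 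If you want to salvage your version, you should either carry out the degeneration analysis in full for general $r$ (nontrivial) or postpone taking the residue until after the cusp behaviour of $\det\Psi^{-1}$ has been controlled via the difference equation. Your final dimension-count paragraph is essentially the paper's argument and is fine.
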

\begin{proof}
Again, the $\FF_q[t]$-linear independence of the functions $\mathfrak{f}_1(\bm{z})$, $\mathfrak{f}_2(\bm{z})$, $\dots, \mathfrak{f}_r(\bm{z})$, for each $\bm{z} \in \Omega^r$ in combination with Lemma \ref{mooredetlem} implies that $\det\Psi(\bm{z})$ is a non-zero element of $\TT$, i.e. $\det\Psi$ is a nowhere-vanishing rigid holomorphic function from $\Omega^r$ to $\TT$. Thus, $\det\Psi^{-1}$ is a well-defined, nowhere-vanishing rigid-holomorphic function from $\Omega^r$ to $\mathbb{L}$. Further, from Lemma \ref{AGFcoordeqnlem}, we obtain
\[\Psi^{-1}(\gamma \bm{z}) = \rho_t(\gamma) \Psi^{-1}(\bm{z}) \text{diag}(j(\gamma,\bm{z}),j(\gamma,\bm{z})^{q},\dots,j(\gamma,\bm{z})^{q^{r-1}}).\]
After taking determinants, the previous identity and Prop. \ref{ddmfthm}, with coefficients extended to $\mathbb{L}$, prove the existence of a nowhere-vanishing element of $\mathbb{L}\otimes\bm{M}^!_{\frac{q^r-1}{q-1},1}$.

It remains to understand the behavior at infinity of the function $\det\Psi^{-1}$. From \eqref{RATdiffeq}, we obtain the equality of norms $||\det\Psi^{-1}(\bm{z})||^{q-1} = ||\Delta(\bm{z})/(t-\theta)||$, for all $\bm{z} \in \Omega^r$. The order of vanishing in $u$ of $\Delta$ is $q-1$, by Basson's product formula for $\Delta$, \cite[Theorem 3.5.14]{Bass}. Thus we see that $||\det\Psi^{-1}||$ grows like $|u_{\widetilde{\bm{z}}}(z_r)|$, for $\widetilde{\bm{z}}$ fixed, as $|\bm{z}|_{\Im} \rightarrow \infty$, proving that $\det\Psi^{-1}$ belongs to $\mathbb{L} \otimes \bm{M}_{\frac{q^r-1}{q-1},1}$ and is single-cuspidal. This finishes the proof of existence of a single-cuspidal, nowhere-vanishing element of $\bm{M}_{\frac{q^r-1}{q-1},1}$; call it $\eta$.

Any element of $\bm{M}_{\frac{q^r-1}{q-1},1}$ is necessarily cuspidal, and the obvious 1-dimensionality of $\bm{M}_{0,0}$ combined with the nowhere-vanishing and single-cuspidality of $\eta$ imply that the former space is 1-dimensional, as desired.
\end{proof}

\section{Deformations of vectorial Eisenstein series}
With the one-dimensionality of $\bm{M}_{1+q+\cdots q^{r-1},1}$ guaranteed by Theorem \ref{nowherevan} above, we now rephrase everything in terms of deformations of vectorial Eisenstein series for $\GL_r(A)$ to give an alternate construction of this form whose coefficient of $u$ may be explicitly determined.

\subsection{The set-up}
We recall that we have let\footnote{Here we emphasize the non-canonical choice of algebra generator for the ring $A$ and the dependence on this choice of what we do to follow.} $\chi_t: A \rightarrow \FF_q[t] \subseteq \TT$ be the morphism of $\FF_q$-algebras determined by $\theta \mapsto t$ and $\rho_t : M_{k \times l}(A) \rightarrow M_{k \times l}(\FF_q[t])$ the map defined by
\[[a_{ij}] \mapsto [\chi_t(a_{ij})], \text{ for all } [a_{ij}] \in M_{k \times l}(A).\]
Observe that if $\gamma \in M_{k \times l}(A)$ and $\gamma' \in M_{l \times m}(A)$, then $\rho_t(\gamma \gamma') = \rho_t(\gamma)\rho_t(\gamma')$.
Finally, set $A^r := M_{r \times 1}(A)$ and $\TT^r := M_{r \times 1}(\TT)$.

\begin{definition}
Let $k$ be a positive integer. The \emph{vectorial Eisenstein series} of weight $k$ associated to the representation $\rho_t: \GL_r(A) \rightarrow \GL_r(\FF_q[t])$ is the function $\mathcal{E}_k : \Omega^r \rightarrow \TT^r$ given by
\[\mathcal{E}_k(\bm{z}) := \sideset{}{'}\sum_{\bm{a} \in A^r} (\bm{z}\bm{a})^{-k} \rho_t(\bm{a});\]
here the primed summation signifies the exclusion of the zero column vector.
\end{definition}

We will see just below that $\mathcal{E}_{q^i}$ is non-zero whenever for all non-negative integers $i$. With a little more work and the introduction of Goss polynomials, it can be shown that $\mathcal{E}_k$ is non-zero whenever $k \equiv 1 \pmod{q-1}$. Since we will not use this here, we do not bother with the details. Now we deal will holomorphy and weak modularity of these functions.

\subsubsection{Holomorphy}
For $\bm{a} \in A^r$, write $\bm{a} = (a_r,\dots,a_1)^{tr}$ and let $|\bm{a}| := \max_{i = 1,\dots,r} |a_i|$. Assume, for some positive integer $n$, that $|\bm{z}|_\Im > q^{-n} |\bm{z}|$, i.e. that $\bm{z}$ is in the admissible open subset $\Omega_n$. We examine the convergence of the coordinates of $\mathcal{E}_k$, which, for all $i = 1,\dots, r$, are of the form $\sum_{\bm{a} \in A^r}' (\bm{z}\bm{a})^{-k} \chi_t(\bm{a}^{tr}\bm{e}_i)$. By our assumption on the size of $\bm{z}$, the general term has size
\[||(\bm{z}\bm{a})^{-k} \chi_t(a_i)|| < |\bm{a}|^{-k} q^{kn} |\bm{z}|^{-k} \leq q^{nk} |\bm{a}|^{-k};\]
the last inequality follows from the assumption that $z_1 = 1$. Since there are only finitely many elements in $A^r$ of a given absolute value, we see that the general term tends to zero, giving the desired uniform convergence on $\Omega_n$.

\subsubsection{Weak Modularity}
\emph{For all $\bm{z} \in \Omega$ and $\gamma \in \Gamma_r$,}
\begin{eqnarray*}
\mathcal{E}_k(\gamma \bm{z}) &=& \sideset{}{'}\sum_{\bm{a} \in A^r}  (j(\gamma,\bm{z})^{-1}\bm{z}\gamma^{-1}\bm{a})^{-k} \rho_t(\bm{a}) \\
&=& j(\gamma,\bm{z})^{k}\rho_t(\gamma)\sideset{}{'}\sum_{\bm{a} \in A^r} (\bm{z}\gamma^{-1}\bm{a})^{-k} \rho_t(\gamma^{-1}\bm{a}) \\
&=& j(\gamma,\bm{z})^{k}\rho_t(\gamma) \mathcal{E}_j(\bm{z}).
\end{eqnarray*}

\subsubsection{The coordinates of $\tau^k(\mathcal{E}_1)$}
Here we look closely at the coordinate functions of the subfamily $\mathcal{E}_{1},\mathcal{E}_{q},\mathcal{E}_{q^2},\dots$ to show that they are non-zero. We extend the action of $\tau$ to $\TT^r$ coordinate-wise, as was done with matrices above. We extend the action of $\tau$ to functions $f : \Omega^r \rightarrow \TT$ by defining $\tau(f) : \Omega^r \rightarrow \TT$ to be the map $\bm{z} \mapsto \tau(f(\bm{z}))$, for each $\bm{z} \in \Omega^r$, and similarly for functions $\Omega^r \rightarrow \TT^r$.

Define the coordinates of the Eisenstein series
\[\mathcal{E}_{1} := (\epsilon_r,\epsilon_{r-1},\dots,\epsilon_1)^{tr},\]
and, for $\bm{a} \in A^{r}$, write $\bm{a} := {a_r \choose \widetilde{\bm{a}}}$, with $\widetilde{\bm{a}} := (a_{r-1},a_{r-2},\dots,a_{1})^{tr} \in A^{r-1}$. Observe that, splitting the sum defining $\mathcal{E}_1$ according to whether $a_r$ is zero or not, for each non-negative integer $k$ we may write
\[\tau^k(\epsilon_r)(\bm{z}) = -\sum_{a_r \in A_+} \sum_{\widetilde{\bm{a}} \in A^{r-1}} \frac{\chi_t(a_r)}{(a_r z_r + \widetilde{\bm{z}}\widetilde{\bm{a}})^{q^k}} = -\pitilde^{q^k} \sum_{a_r \in A_+} \chi_t(a_r) u_{\widetilde{\bm{z}}}(a_r z_r)^{q^k},\]
\[\tau^k(\epsilon_i)(\bm{z}) = \sideset{}{'}\sum_{\widetilde{\bm{a}} \in A^{r-1}} \frac{\chi_t(a_i)}{(\widetilde{\bm{z}}\widetilde{\bm{a}})^{q^k}} -\sum_{a_r \in A_+} \sum_{\widetilde{\bm{a}} \in A^{r-1}} \frac{\chi_t(a_i)}{(a_r z_r + \widetilde{\bm{z}}\widetilde{\bm{a}})^{q^k}},\]
for $i = 1,2,\dots,r-1$.

We readily obtain the following result.

\begin{proposition} \label{limprop}
For all $k \geq 0$, the function $\tau^k(\mathcal{E}_1)$ is not identically zero.

Further, for fixed $\widetilde{\bm{z}} \in \Omega^{r-1} \subseteq \CC_\infty^{r-1}$, as $|\bm{z}|_\Im \rightarrow \infty$, the order of vanishing in $u_{\widetilde{\bm{z}}}$ of $\tau^k(\epsilon_r)$ equals $q^k$, and
\[\tau^k(\epsilon_j) \rightarrow \tau^k(\widetilde{\epsilon_j}) := \sideset{}{'}\sum_{\widetilde{\bm{a}} \in A^{r-1}} (\widetilde{\bm{z}}\widetilde{\bm{a}})^{-q^k}\chi_t(a_j) \text{ for all } j = 1,2,\dots,r-1.\]
\end{proposition}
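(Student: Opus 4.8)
The plan is to read both assertions off the two explicit expansions of $\tau^k(\epsilon_r)$ and $\tau^k(\epsilon_i)$ displayed just above the statement, treating the top coordinate $\epsilon_r$ separately from the remaining ones. The single preliminary I would establish is the $u$-expansion of $u_{\widetilde{\bm z}}(a z_r)$ as a function of $u := u_{\widetilde{\bm z}}(z_r)$. Writing $\phi$ for the rank $r-1$ Drinfeld module attached to the lattice $\Lambda(\widetilde{\bm z}) = \widetilde{\bm z}A^{r-1}$, one has $\mathfrak{e}_{\widetilde{\bm z}}(a z_r) = \phi_a(\mathfrak{e}_{\widetilde{\bm z}}(z_r))$ for every $a \in A$, where $\phi_a = a\tau^0 + \cdots + (\text{top})\,\tau^{(r-1)\deg a}$ has linear coefficient $a$ and $\tau$-degree $(r-1)\deg a$. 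Since $\mathfrak{e}_{\widetilde{\bm z}}(z_r) = (\pitilde u)^{-1}$, substituting and inverting shows that $u_{\widetilde{\bm z}}(a z_r)$ is a power series in $u$ of order exactly $q^{(r-1)\deg a}$, with $u_{\widetilde{\bm z}}(z_r) = u$ on the nose in the case $a = 1$ since $\phi_1 = \mathrm{id}$. By Proposition \ref{uexpprop} these expansions converge for $|\bm z|_\Im$ sufficiently large.

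For the top coordinate I would insert this into the displayed identity $\tau^k(\epsilon_r)(\bm z) = -\pitilde^{q^k}\sum_{a_r \in A_+}\chi_t(a_r)\,u_{\widetilde{\bm z}}(a_r z_r)^{q^k}$. The $a_r$-th summand then has order $q^{k+(r-1)\deg a_r}$ in $u$, so the unique term of minimal order $q^k$ is the one with $a_r = 1$, all others having order at least $q^{k+r-1} > q^k$. No cancellation can occur at order $q^k$, and the leading term equals $-\pitilde^{q^k}\chi_t(1)\,u^{q^k} = -\pitilde^{q^k} u^{q^k}$. Hence $\tau^k(\epsilon_r)$ has order of vanishing exactly $q^k$ and is, in particular, not identically zero; as $\epsilon_r$ is a coordinate of $\mathcal{E}_1$, this simultaneously proves the non-vanishing of $\tau^k(\mathcal{E}_1)$ (the first assertion) and the order statement for $\tau^k(\epsilon_r)$.

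For the coordinates $\tau^k(\epsilon_j)$ with $j < r$, I would use the displayed splitting into the first sum over $\widetilde{\bm a} \in A^{r-1}$, which is precisely $\tau^k(\widetilde{\epsilon_j})(\widetilde{\bm z})$, and a double sum over $a_r \in A_+$ and $\widetilde{\bm a}$. Every index $\bm a = (a_r,\widetilde{\bm a})^{tr}$ occurring in the double sum has $a_r \neq 0$, hence is a nonzero element of $A^r$ with $|\bm a| \geq 1$. Normalizing $\bm a$ by a coefficient of maximal absolute value exhibits $\bm z \bm a / |\bm a|$ as $\ell_H(\bm z)$ for a $K_\infty$-rational hyperplane $H$, so that $|a_r z_r + \widetilde{\bm z}\widetilde{\bm a}| = |\bm z \bm a| \geq |\bm z|_\Im\, |\bm a| \geq |\bm z|_\Im$ directly from the definition of the imaginary part. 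Since $\|\chi_t(a_j)\| \leq 1$, each term of the double sum has norm at most $|\bm z|_\Im^{-q^k}$, and the non-archimedean triangle inequality bounds the entire double sum by $|\bm z|_\Im^{-q^k}$, which tends to $0$ as $|\bm z|_\Im \to \infty$. This yields the claimed limit $\tau^k(\epsilon_j) \to \tau^k(\widetilde{\epsilon_j})$.

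I expect the genuine content to lie in the preliminary step: pinning down that $u_{\widetilde{\bm z}}(a z_r)$ has order exactly $q^{(r-1)\deg a}$ and, consequently, that the $a_r = 1$ term strictly dominates in the expansion of $\tau^k(\epsilon_r)$, so that no cancellation can depress the order below $q^k$. This rests on the rank $r-1$ Drinfeld-module identity together with the convergence furnished by Proposition \ref{uexpprop}; once the imaginary-part inequality $|\bm z \bm a| \geq |\bm z|_\Im\, |\bm a|$ is in hand, the remaining estimates for $j < r$ are routine.
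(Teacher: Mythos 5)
Your proof is correct and takes essentially the same route as the paper, which derives the two displayed expansions of $\tau^k(\epsilon_r)$ and $\tau^k(\epsilon_i)$ and then simply asserts that the proposition follows ``readily.'' You supply exactly the details that assertion suppresses: that $u_{\widetilde{\bm{z}}}(a z_r)$ has order $q^{(r-1)\deg a}$ in $u$ (so the $a_r=1$ term alone contributes at order $q^k$, with leading coefficient $-\pitilde^{q^k}$), and the estimate $|a_r z_r + \widetilde{\bm{z}}\widetilde{\bm{a}}| \geq |\bm{z}|_\Im$ that kills the double sum in the limit.
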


\subsection{The Main Result}
Define $\Xi := \left[ \mathcal{E}_1 \ \mathcal{E}_q \ \cdots \ \mathcal{E}_{q^{r-1}} \right]$, and observe that $\tau^k(\mathcal{E}_1) = \mathcal{E}_{q^k}$, for all $k \geq 0$. Set
\[\widetilde{\mathcal{E}}_1 := (\widetilde{\epsilon}_{r-1}, \dots, \widetilde{\epsilon}_1)^{tr} \text{ and } \widetilde{\Xi} := [ \widetilde{\mathcal{E}}_1 \ \tau(\widetilde{\mathcal{E}}_1) \ \cdots \ \tau^{r-2}(\widetilde{\mathcal{E}}_1) ]. \]

\begin{theorem} \label{refinedmainthm}
The function $\det\Xi$ is a nowhere-vanishing, single-cuspidal deformation of Drinfeld modular forms in $\TT^\times \otimes \bm{M}_{1+q+\cdots+q^{r-1},1}$.

When $r = 2$, the coefficient of $u$ in $\det\Xi$ equals $\pitilde L(\chi_t,q)$, and when $r \geq 3$, it equals  $-\pitilde \tau(\det \widetilde{\Xi})$.
\end{theorem}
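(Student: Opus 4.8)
The plan is to check the defining properties of a single-cuspidal deformation in turn, extract the leading $u$-coefficient by a cofactor expansion, and then upgrade non-vanishing to the unit property by induction on $r$. For the weak modularity, note that the $(i+1)$-st column of $\Xi$ is $\mathcal{E}_{q^i}=\tau^i(\mathcal{E}_1)$, so the transformation law $\mathcal{E}_{q^i}(\gamma\bm{z})=j(\gamma,\bm{z})^{q^i}\rho_t(\gamma)\mathcal{E}_{q^i}(\bm{z})$ computed in the Weak Modularity paragraph assembles into $\Xi(\gamma\bm{z})=\rho_t(\gamma)\,\Xi(\bm{z})\,\text{diag}(j,j^q,\dots,j^{q^{r-1}})$. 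Taking determinants and using $\det\rho_t(\gamma)=\chi_t(\det\gamma)=\det\gamma\in\FF_q^\times$ (since $\det\gamma\in A^\times=\FF_q^\times$ and $\chi_t$ fixes $\FF_q$) gives $\det\Xi(\gamma\bm{z})=\det(\gamma)\,j(\gamma,\bm{z})^{1+q+\cdots+q^{r-1}}\det\Xi(\bm{z})$, which is \eqref{modeq} with weight $\tfrac{q^r-1}{q-1}$ and type $1$; holomorphy of $\det\Xi$ on $\Omega^r$ is immediate from that of each coordinate of each $\mathcal{E}_{q^i}$.

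Next I would determine the behaviour at infinity by expanding $\det\Xi$ along its first row, whose entries are $\tau^k(\epsilon_r)$ for $k=0,\dots,r-1$, writing $\det\Xi=\sum_{k=0}^{r-1}(-1)^k\,\tau^k(\epsilon_r)\,M_k$ with $M_k$ the minor obtained by deleting row $1$ and column $k+1$. By Proposition \ref{limprop} the factor $\tau^k(\epsilon_r)$ vanishes in $u_{\widetilde{\bm{z}}}$ to order exactly $q^k$, while every remaining entry $\tau^l(\epsilon_j)$ with $j\le r-1$ tends to $\tau^l(\widetilde{\epsilon}_j)$ as $|\bm{z}|_\Im\to\infty$; hence the strictly lowest-order contribution is the $k=0$ term. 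There $\epsilon_r$ has leading term $-\pitilde\,u_{\widetilde{\bm{z}}}(z_r)$ coming from the summand $a_r=1$ (all others of order $\ge q^{r-1}$), and $M_0$ converges to the determinant of the matrix with columns $\tau(\widetilde{\mathcal{E}}_1),\dots,\tau^{r-1}(\widetilde{\mathcal{E}}_1)$, which is exactly $\tau(\det\widetilde{\Xi})$. This shows $\det\Xi$ is holomorphic at infinity with $u$-order exactly $1$, hence single-cuspidal, with coefficient of $u$ equal to $-\pitilde\,\tau(\det\widetilde{\Xi})$; when $r=2$ one has $\det\widetilde{\Xi}=\widetilde{\epsilon}_1=\sideset{}{'}\sum_{a\in A}\chi_t(a)/a=-L(\chi_t,1)$, so that $-\pitilde\,\tau(\widetilde{\epsilon}_1)=\pitilde\,L(\chi_t,q)$, recovering both displayed formulas.

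Having identified $\det\Xi$ as a single-cuspidal deformation of weight $\tfrac{q^r-1}{q-1}$ and type $1$, Proposition \ref{ddmfthm} and the one-dimensionality of $\bm{M}_{\frac{q^r-1}{q-1},1}$ from Theorem \ref{nowherevan} give $\det\Xi=c\,\eta$ for the nowhere-vanishing generator $\eta$ and a unique $c\in\TT$; the point is to prove $c\in\TT^\times$. Matching coefficients of $u$ yields $c=\bigl(-\pitilde\,\tau(\det\widetilde{\Xi})\bigr)/\eta_1$, where $\eta_1\in\CC_\infty^\times$ is the leading coefficient of $\eta$; since $\pitilde,\eta_1\in\CC_\infty^\times$ and $\tau$ preserves $\TT^\times$, it suffices to show $\det\widetilde{\Xi}\in\TT^\times$. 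But $\widetilde{\mathcal{E}}_1$ is precisely the weight-one vectorial Eisenstein series for $\GL_{r-1}(A)$ and $\widetilde{\Xi}$ its associated matrix, so $\det\widetilde{\Xi}$ is the rank $r-1$ instance of the present theorem. I would therefore induct on $r$: the inductive hypothesis gives $\det\widetilde{\Xi}\in\TT^\times$ for $r\ge 3$, and the base case $r=2$ reduces to $L(\chi_t,q)\in\TT^\times$, which follows from Pellarin's evaluation $L(\chi_t,1)=-\pitilde/((t-\theta)\omega(t))$ together with $\tau(\TT^\times)\subseteq\TT^\times$, once one observes that $t-\theta$ and $\omega$ are units of $\TT$ because their zeros and poles lie at $|t|=q^{q^i}>1$, outside the closed unit disc.

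The main obstacle I anticipate is exactly this last passage from ``$\det\Xi(\bm{z})\neq 0$'' to ``$\det\Xi(\bm{z})\in\TT^\times$'', i.e.\ excluding zeros of $\det\Xi(\bm{z})$ in $|t|\le 1$: the Moore-determinant criterion of Lemma \ref{mooredetlem} only delivers non-vanishing in the fraction field $\mathbb{L}$, so the unit property must be routed through the recursive leading coefficient and the induction, with the base case resting on the explicit unit $-\pitilde/((t-\theta)\omega)$ in the Tate algebra. A secondary point requiring care is the cofactor expansion of the second paragraph, where one must use the precise orders $q^k$ furnished by Proposition \ref{limprop} to guarantee that the $k=0$ term is the unique lowest-order contribution and that the surviving minor genuinely converges to $\tau(\det\widetilde{\Xi})$.
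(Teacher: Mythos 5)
Your proposal is correct and follows essentially the same route as the paper: weak modularity from the column transformation law, extraction of the leading $u$-coefficient $-\pitilde\,\tau(\det\widetilde{\Xi})$ via the determinant expansion together with Proposition \ref{limprop}, induction on the rank with the $r=2$ base case resting on Pellarin's results, and an appeal to Theorem \ref{nowherevan} for the nowhere-vanishing. The only notable difference is that you justify $L(\chi_t,q)\in\TT^\times$ explicitly via Pellarin's identity $L(\chi_t,1)=-\pitilde/((t-\theta)\omega(t))$ and the unit property of $t-\theta$ and $\omega$ in $\TT$, whereas the paper simply asserts this membership after citing the relevant lemmas of \cite{FPannals}.
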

\begin{proof}
The proof of Theorem \ref{refinedmainthm} shall proceed by induction. We establish the base-case now.

\subsubsection{Construction in rank two}
Everything, except nowhere-vanishing, for the realization of the title is already contained in \cite{FPannals}. Note that there are some superficial differences between Pellarin's setting and ours due to the different action of $\GL_2(A)$ on $\Omega^2$.

From \cite[Lemma 14]{FPannals} we have that $\det \Xi$ is weakly modular of weight $q+1$ and type $1$. Upon proving that $\det \Xi$ is well-behaved at infinity we will conclude that it is a $\TT$-valued modular form. Explicitly,
\begin{eqnarray}
\label{deteq1} \det \Xi &=& \left( \sideset{}{'}\sum_{a,b \in A} (az+b)^{-1} \chi_t(a) \right)\left( \sideset{}{'}\sum_{a,b \in A} (az+b)^{-q} \chi_t(b) \right) \\
\label{deteq2} && - \left( \sideset{}{'}\sum_{a,b \in A} (az+b)^{-1} \chi_t(b) \right)\left( \sideset{}{'}\sum_{a,b \in A} (az+b)^{-q} \chi_t(a) \right).\end{eqnarray}
From \cite[Lemma 21]{FPannals}, we see that the first sum on the right side of \eqref{deteq1} is divisible in $\TT[[u]]$ by $u(z) := \sum_{a \in A} (\pitilde z + \pitilde a)^{-1}$, but not by $u^2$, and similarly, the second sum in \eqref{deteq2} is divisible in $\TT[[u]]$ by $u^q$. From \cite[Lemma 25]{FPannals}, we see that when $|z| = |z|_\Im := \inf_{\kappa \in K_\infty} |z - \kappa|$ tends to infinity the second sum in \eqref{deteq1} tends to $-L(\chi_t,q) := -\sum_{a \in A_+} a^{-q} \chi_t(a)$ while the first sum in \eqref{deteq2} tends to $-L(\chi_t,1) := -\sum_{a \in A_+} a^{-1} \chi_t(a)$. Thus, from the $u$-expansion we see that $\det \Xi$ is not-identically zero, is well behaved at infinity, and that the coefficient of $u$ equals $\pitilde L(\chi_t,q) \in \TT^\times$. After Thm. \ref{nowherevan} above we conclude that $\pitilde^{-1}L(\chi_t,q)^{-1}\det\Xi$ is a nowhere-vanishing, single-cuspidal Drinfeld modular form of weight $1+q$ and type $1$ in rank $2$, whose coefficient of $u$ equals $1$. \hfill $\qed$

\subsubsection{The inductive step}
The transformation properties of the columns of $\Xi$ are summarized by the following matrix identity,
\[\Xi(\gamma \bm{z}) = \rho_t(\gamma) \Xi(\bm{z})\text{diag}(j(\gamma,\bm{z}),j(\gamma,\bm{z})^{q},\dots,j(\gamma,\bm{z})^{q^{r-1}}),\]
which we have seen holds for all $\bm{z} \in \Omega^r$ and $\gamma \in \Gamma_r$.
Taking determinants we observe that $\det\Xi$ is a weak deformation of Drinfeld modular forms of weight $1+q+\cdots+q^{r-1}$ and type $1$. For each $j = 1,2,\dots,r-1$, the entries of $\tau^j(\mathcal{E}_1)$ have well-defined limits as $|\bm{z}|_\Im \rightarrow \infty$ with $\widetilde{\bm{z}}$ fixed, and we observe  $\det \Xi$ has a $u_{\widetilde{\bm{z}}}$-expansion with no negative terms by \cite[Prop. 3.2.9]{Bass} and Prop. \ref{ddmfthm} above. Hence, $\det \Xi$ is an element of $\TT \otimes \bm{M}_{1+q+\cdots+q^{r-1},1}$.

To see that $\det\Xi$ is non-zero, we look closely at its $u$-expansion. We employ the well-known Leibniz formula for the determinant,
\[ \det \Xi = \sum_{\sigma \in S_r} \sgn(\sigma) \prod_{i = 1}^r \tau^{\sigma(i)-1} (\epsilon_i);\]
here the sum is over all elements of the symmetric group on $r$ letters, and $\sgn$ is the usual sign function.
By the Prop. \ref{limprop}, we may focus our attention on those $\sigma \in S_r$ such that $\sigma(r) = 1$ in order to determine the coefficient of $u_{\widetilde{\bm{z}}}$. A moment's thought gives that the coefficient of $u_{\widetilde{\bm{z}}}$ in $\det \Xi$ is given by
\[-\pitilde\sum_{\substack{\sigma \in S_r \\ \sigma(r) = 1}} \sgn(\sigma) \prod_{i = 1}^{r-1} \tau^{\sigma(i)-1} (\widetilde{\epsilon}_i(\widetilde{\bm{z}})) = -\pitilde \tau(\det \widetilde{\Xi}(\widetilde{\bm{z}})).\]
It follows inductively by the calculation done in rank $2$ that $\tau(\det\Xi)$ is an element of $\TT^\times\otimes \bm{M}_{1+q+\cdots+q^{r-1},1}$ and, in particular, that it is non-zero. Thm. \ref{nowherevan} implies that it is nowhere-zero and finishes the proof.
\end{proof}

\begin{remark}
In the rank 2 setting, we may now compare $\det\Xi$ with Gekeler's form $h$, normalized as in \cite{Gekinv}. We have $\det \Xi = -\pitilde L(\chi_t,q) h$.
\end{remark}

\begin{remark}[Construction of all single-cuspidal forms in rank 2] \label{snglcsprmk}
The same idea as in the construction of $h$ above may be used to show that in rank 2 the form $\det [ \mathcal{E}_1 \ \mathcal{E}_{1+k(q-1)} ]$ is a non-zero, single-cuspidal form of weight $2 + k(q-1)$, for all $k \geq 1$. This captures all of the single-cuspidal forms in for $\GL_2(A)$. We shall pose some questions about this in arbitrary rank below.
\end{remark}

\subsection{Corollaries for deformations of vectorial Eisenstein series}
As an immediate consequence of Theorem \ref{refinedmainthm} above
we obtain the following result.

\begin{corollary}
Let $\mathcal{E}_1 := (\epsilon_r,\epsilon_{r-1},\dots,\epsilon_1)^{tr}$. Then for all $i = 1,2,\dots,r$ and all $\bm{z} \in \Omega^r$ we have $\epsilon_i(\bm{z}) \neq 0$. In particular, the vectorial modular form $\mathcal{E}_1$ is nowhere-vanishing on $\Omega^r$.
\end{corollary}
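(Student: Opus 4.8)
The plan is to exploit the nowhere-vanishing of $\det\Xi$ proved in Theorem \ref{refinedmainthm}, together with the fact that each coordinate $\epsilon_i$ occupies an entire row of the matrix $\Xi$, filled out by its successive $\tau$-powers. First I would recall that $\tau^k(\mathcal{E}_1) = \mathcal{E}_{q^k}$, so that in $\Xi = [\mathcal{E}_1 \ \tau(\mathcal{E}_1) \ \cdots \ \tau^{r-1}(\mathcal{E}_1)]$ each coordinate $\epsilon_i$ of $\mathcal{E}_1 = (\epsilon_r, \epsilon_{r-1},\dots,\epsilon_1)^{tr}$ furnishes a single row, namely the row with entries $\epsilon_i, \tau(\epsilon_i), \dots, \tau^{r-1}(\epsilon_i)$.

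The key point is the behavior of $\tau$ under pointwise evaluation. By the definition of the $\tau$-action on functions $\Omega^r \to \TT$, one has $\tau^k(\epsilon_i)(\bm{z}) = \tau^k(\epsilon_i(\bm{z}))$ for every $\bm{z} \in \Omega^r$. Since the action of $\tau$ on $\TT$ raises each coefficient of an element to the $q$-th power, it fixes the zero element. Consequently, if $\epsilon_i(\bm{z}) = 0$ for some fixed index $i$ and some fixed point $\bm{z}$, then every entry $\tau^k(\epsilon_i)(\bm{z})$ of the corresponding row of $\Xi(\bm{z})$ also vanishes.

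I would then conclude by contradiction. Suppose $\epsilon_i(\bm{z}) = 0$ for some $i \in \{1,\dots,r\}$ and some $\bm{z} \in \Omega^r$. By the observation just made, the entire corresponding row of $\Xi(\bm{z})$ is zero, and therefore $\det\Xi(\bm{z}) = 0$. This contradicts the nowhere-vanishing of $\det\Xi$ furnished by Theorem \ref{refinedmainthm}. Hence $\epsilon_i(\bm{z}) \neq 0$ for all $i$ and all $\bm{z} \in \Omega^r$, which is exactly the asserted nowhere-vanishing of the vectorial modular form $\mathcal{E}_1$.

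I do not expect a genuine obstacle here: the whole content is the row structure of $\Xi$ combined with the already-established nowhere-vanishing of its determinant. The only step meriting a moment's care is the interplay between $\tau$ and evaluation, namely that $\tau$ fixes $0 \in \TT$ and commutes with pointwise evaluation in the sense above, but this is immediate from the definitions recalled earlier.
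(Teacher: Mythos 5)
Your argument is correct and is precisely the content behind the paper's assertion that the corollary is an ``immediate consequence'' of Theorem \ref{refinedmainthm}: each $\epsilon_i$ fills an entire row of $\Xi$ via its $\tau$-powers, $\tau$ fixes $0 \in \TT$ and commutes with evaluation, so a zero of $\epsilon_i$ at $\bm{z}$ would kill a row of $\Xi(\bm{z})$ and hence $\det\Xi(\bm{z})$, contradicting its nowhere-vanishing. The paper gives no further detail, and your write-up supplies exactly the intended reasoning.
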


\begin{remark}
Conversely, the non-vanishing of $\epsilon_i(\bm{z})$ for all $i = 1,2,\dots,r$ and all $\bm{z} \in \Omega^r$ combined with the $\FF_q[t]$-linear independence of $\epsilon_1(\bm{z}),\epsilon_2(\bm{z}),\dots,\epsilon_r(\bm{z})$ for all $\bm{z} \in \Omega^r$ and an application of the theory of Moore determinants in our setting (see \S \ref{mooredetsect} below) is enough to guarantee the nowhere-vanishing of $\det\Xi$. Since the verification of these sufficient conditions appears difficult using the definition of $\mathcal{E}_1$ given above, this explains the needs for the Anderson generating functions and Moore determinants employed below.
\end{remark}

As another consequence of Theorem \ref{refinedmainthm} above, we obtain non-trivial dependence relations between vectorial Eisenstein series of low weights. We thank F. Pellarin for this remark.

\begin{corollary} \label{EScor1}
Suppose $k_1,k_2,\dots,k_r$ are positive integers all congruent to $1$ modulo $q-1$ such that $k_1+k_2+\cdots+k_r < 1+q+\cdots+q^{r-1}$. Then $\det[ \mathcal{E}_{k_1} \ \mathcal{E}_{k_2} \ \cdots \ \mathcal{E}_{k_r}] = 0$.
\end{corollary}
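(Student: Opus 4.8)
The plan is to show that $M := [\,\mathcal{E}_{k_1}\ \mathcal{E}_{k_2}\ \cdots\ \mathcal{E}_{k_r}\,]$ has a determinant landing in a space of deformations so constrained by its weight $k := k_1 + \cdots + k_r$ that it is forced to vanish, the leverage coming from the nowhere-vanishing, single-cuspidal form $\eta \in \bm{M}_{w,1}$ of weight $w := \frac{q^r-1}{q-1}$ produced in Theorem \ref{nowherevan}. First I would record that $M$ transforms exactly as $\Xi$ does in Theorem \ref{refinedmainthm}: the weak modularity of each $\mathcal{E}_{k_i}$ yields
\[ M(\gamma \bm{z}) = \rho_t(\gamma)\, M(\bm{z})\, \text{diag}\!\left( j(\gamma,\bm{z})^{k_1}, \dots, j(\gamma,\bm{z})^{k_r} \right), \]
so that, taking determinants and using $\det \rho_t(\gamma) = \chi_t(\det \gamma) = \det \gamma \in \FF_q^\times$, we obtain $\det M(\gamma \bm{z}) = \det(\gamma)\, j(\gamma,\bm{z})^{k}\, \det M(\bm{z})$. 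Hence $\det M$ is weakly modular of weight $k$ and type $1$. Exactly as in Theorem \ref{refinedmainthm}, each coordinate of each $\mathcal{E}_{k_i}$ is holomorphic at infinity (its Eisenstein sum stays bounded as $|\bm{z}|_\Im \to \infty$), so $\det M$ has a $u$-expansion with no negative terms and therefore belongs to $\TT \otimes \bm{M}_{k,1}$.

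Next I would verify that $\det M$ is in fact cuspidal. The top coordinate of $\mathcal{E}_{k_i}$ is $\sideset{}{'}\sum_{\bm{a} \in A^r}(\bm{z}\bm{a})^{-k_i}\chi_t(a_r)$; the terms with $a_r = 0$ drop out because $\chi_t(0) = 0$, and the remaining terms tend to $0$ as $|\bm{z}|_\Im \to \infty$ with $\widetilde{\bm{z}}$ fixed, since $|a_r z_r + \widetilde{\bm{z}}\widetilde{\bm{a}}|$ grows with the imaginary part when $a_r \neq 0$ --- this is the same phenomenon recorded for $\mathcal{E}_1$ in Proposition \ref{limprop}. Consequently the constant term of the $u$-expansion of $\det M$ equals the determinant of the limit matrix $\lim_{|\bm{z}|_\Im \to \infty} M(\bm{z})$, whose entire top row is zero; thus this constant term vanishes and $\text{ord}_\infty(\det M) \geq 1$.

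The main step then exploits the hypothesis $k < w$. Because each $k_i \equiv 1 \pmod{q-1}$ we have $k \equiv r \equiv w \pmod{q-1}$, so $w - k \equiv 0 \pmod{q-1}$, and I would consider the auxiliary form
\[ N := (\det M)^{\,w}\, \eta^{-k}. \]
Its weight is $wk - kw = 0$ and its type is $w - k \equiv 0 \pmod{q-1}$. Since $\eta$ is nowhere-vanishing on $\Omega^r$, its reciprocal is rigid holomorphic, so $N$ is rigid holomorphic on all of $\Omega^r$; and since $\text{ord}_\infty(\eta) = 1$ while $\text{ord}_\infty(\det M) \geq 1$, we get $\text{ord}_\infty(N) \geq w - k > 0$. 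Thus $N$ is a weight-$0$, type-$0$ deformation that vanishes at infinity, i.e. an element of $\bm{M}_{0,0}(\TT)$, which --- by the $\TT$-analogue of the one-dimensionality of $\bm{M}_{0,0}$ invoked in Theorem \ref{nowherevan}, together with Proposition \ref{ddmfthm} --- consists only of constants. A nonzero constant cannot vanish at infinity, so $N = 0$. Finally, as $\TT$ is an integral domain and $\eta^{-k}$ is nowhere-zero, $N = 0$ forces $(\det M)^{w} = 0$ and hence $\det M = 0$.

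I expect the main obstacle to be precisely the last implication --- ruling out nonzero deformations once the weight drops below $w$ --- which is exactly what the form $N$ handles: multiplying a hypothetical low-weight cusp form by a suitable power of the nowhere-vanishing $\eta$ pushes us into the one-dimensional space $\bm{M}_{0,0}$, where positive order of vanishing at infinity is fatal. The remaining care is purely bookkeeping: confirming the holomorphy at infinity and the cuspidality of $\det M$, and checking the weights and types so that $N$ genuinely has weight and type $0$. Here the congruence $w \equiv k \pmod{q-1}$, which holds automatically because every $k_i \equiv 1$, is what lets $N$ have type $0$ without inserting extra $(q-1)$-th powers, and is what keeps the argument clean.
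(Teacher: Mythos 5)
Your argument is correct, and it rests on the same leverage as the paper's sketch --- the nowhere-vanishing, single-cuspidal form of least weight $w = 1+q+\cdots+q^{r-1}$ --- but the endgame is genuinely different. The paper divides $\det M$ by $\det\Xi$ \emph{once}, obtaining a form of \emph{negative} weight $k - w$ that is holomorphic at infinity, and then appeals to the fact (in Basson's sense) that such spaces are trivial. You instead raise to powers to land in weight zero: $N = (\det M)^{w}\eta^{-k}$ lies in $\bm{M}_{0,0}(\TT)$, vanishes at infinity to order at least $w-k>0$, and so is zero by the one-dimensionality of $\bm{M}_{0,0}$ --- a fact the paper already invokes in Theorem \ref{nowherevan} --- after which reducedness of $\TT$ kills $\det M$ itself. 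What your route buys is self-containedness: you avoid citing the vanishing of negative-weight spaces as a separate input, at the modest cost of the congruence bookkeeping $w \equiv k \equiv r \pmod{q-1}$ needed to make $N$ have type $0$ (which you check) and the final $(\det M)^w = 0 \Rightarrow \det M = 0$ step. You also supply details the paper's sketch omits: the transformation law and type of $\det M$, its holomorphy at infinity, and especially its cuspidality via the vanishing of the top row of the limit matrix as $|\bm{z}|_\Im \to \infty$ (the $a_r = 0$ terms dropping out because $\chi_t(0)=0$), which is essential to both arguments since a mere order-zero form at infinity would not suffice.
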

\begin{proof}[Sketch]
From the work done above, the form $\det[ \mathcal{E}_{k_1} \ \mathcal{E}_{k_2} \ \cdots \ \mathcal{E}_{k_r}]$ will be a cuspidal Drinfeld modular form of weight $k_1+k_2+\cdots+k_r$. Thus dividing by the non-vanishing form $\det \Xi$, we obtain a form of negative weight which is holomorphic at infinity, in Basson's sense. The space of such forms consists of zero alone.
\end{proof}

\begin{remark}
Conversely, in rank 2 we see that the deformations of vectorial modular forms $g\mathcal{E}_1$ and $\mathcal{E}_q$ both have weight $q$, and they are $\TT$-linearly independent by the non-vanishing of $\det \Xi$.
\end{remark}

\section{Conclusion}
We are left with several questions in closing, and we hope to return to them at a future time.

1. The exact relation ship between the columns of the matrix $\Psi^{-1}$ and of $\Xi$ must be determined, as was done by Pellarin in the rank 2 setting in \cite{FPannals}.

2. The flexibility of the variable $t$, in particular that it may be evaluated in $\CC_\infty$, suggests the existence of unexplored de Rham cohomologies, extending the notion developed by Gekeler \emph{et. al.}. Indeed, the de Rham cohomology which exists presently solely uses the usual left-action of multiplication by elements of $A$ on the twisted polynomial ring $\CC_\infty\{\tau\}$, where the biderivations described above take their values. Perhaps carefully chosen evaluations of the variable $t$ in $\CC_\infty$ will yield interesting actions of $A$ on $\CC_\infty\{\tau\}$ producing new non-trivial information for the Drinfeld module at hand.

3. The Anderson generating functions $\mathfrak{f}_1, \mathfrak{f}_2, \dots, \mathfrak{f}_r$ may be viewed as elements of the $\theta$-adic Tate module associated to the Drinfeld module $\mathfrak{d}$. Since these Anderson generating functions also arise in connection with the de Rham cohomology of $\mathfrak{d}$, it would be interesting to make a precise relationship between de Rham cohomology and the $\theta$-adic Tate module.

4. It would be interesting to know if all single-cuspidal Drinfeld modular forms in arbitrary rank $r$ may be obtained as the determinant of some subset of $r$ distinct deformations of vectorial Eisenstein series, as was the case in rank 2 mentioned in Remark \ref{snglcsprmk}.

\end{document}